\documentclass[10pt,a4paper]{amsart}

\usepackage{amssymb,amsmath,amsfonts}
\usepackage{amsthm}
\usepackage{enumerate}
\usepackage{theoremref}
\usepackage{color}
\usepackage[all]{xy}
\usepackage{graphicx}

\usepackage{mathpazo}
\usepackage[hyperindex,pageanchor]{hyperref}

\numberwithin{equation}{section}
\newtheorem{theorem}{Theorem}[section]
\newtheorem{lemma}[theorem]{Lemma}
\newtheorem{proposition}[theorem]{Proposition}
\newtheorem{corollary}[theorem]{Corollary}

\theoremstyle{definition}
\newtheorem{definition}[theorem]{Definition}
\theoremstyle{remark}
\newtheorem{remark}[theorem]{Remark}

\newtheorem{example}[theorem]{Example}
\newtheorem{question}[theorem]{Question}

\newcommand{\Ass}{\operatorname{Ass}}

\newcommand{\Att}{\operatorname{Att}}
\newcommand{\Ht}{\operatorname{ht}}
\newcommand{\pd}{\operatorname{pd}}

\newcommand{\Ext}{\operatorname{Ext}}

\newcommand{\Hom}{\operatorname{Hom}}

\newcommand{\depth}{\operatorname{depth}}

\newcommand{\cd}{\operatorname{cd}}

\newcommand{\vpl}{\operatornamewithlimits{\varprojlim}}

\newcommand{\vil}{\operatornamewithlimits{\varinjlim}}

\newcommand{\fm}{\mathfrak{m}}

\begin{document}

\title[Epimorphisms of local cohomology modules]{Epimorphisms of local cohomology modules, a general Peskine-Szpiro theorem, and an application to sheaf cohomology vanishing for thickenings}

\author[A. Dosea]{Andr\'e Dosea}
\author[M. Eghbali]{Majid Eghbali}
\author[C. B. Miranda-Neto]{Cleto B. Miranda-Neto}
\address{Departamento de Matem\'atica, Universidade Federal da Para\'iba - 58051-900, Jo\~ao Pessoa, PB, Brazil}
\email{andredosea@hotmail.com}
\address{School of Mathematics, Institute for Research in Fundamental Sciences (IPM), P. O. Box 19395-5746, Tehran, Iran}
\email{majideghbali83@gmail.com}

\address{Departamento de Matem\'atica, Universidade Federal da Para\'iba - 58051-900, Jo\~ao Pessoa, PB, Brazil}
\email{cleto@mat.ufpb.br}

\subjclass[2020]{Primary: 13D45, 13D07, 13A35, 14F17; Secondary: 14B15}

\keywords{Local cohomology, prime characteristic, sheaf cohomology of thickenings}

\begin{abstract} We study the surjectivity of certain maps involving local cohomology modules, which we can realize as a dual version of part of the investigation developed by Bhatt, Blickle, Lyubeznik, Singh and Zhang on the sheaf cohomology of thickenings (i.e., subschemes defined by powers of ideals), where injectivity played a central role. To this end, we introduce and investigate properties of cohomologically Mittag-Leffler (cML) rings, associated to a given flat local endomorphism (for instance the Frobenius map of a regular ring of prime characteristic), a class which we show to contain, in our setting, the so-called cohomologically full rings of Dao, De Stefani and Ma (in particular, Cohen-Macaulay, Stanley-Reisner, and Du Bois singularities) as well as rings with an ideal inducing a pure endomorphism of the quotient. Our two major specific goals rely upon the  prime characteristic setting. First, we extend for the class of cML rings a classical result of Peskine and Szpiro that relates the cohomological dimension and the height of a given Cohen-Macaulay ideal. Second, we prove and illustrate a Kodaira type vanishing result on the sheaf cohomology of thickenings. 
\end{abstract}

\maketitle

\section{Introduction}

 In this paper we are mainly motivated by work of Bhatt, Blickle, Lyubeznik, Singh and Zhang (see \cite{LP}) on the cohomology of thickenings , i.e., if $R$ is a standard graded polynomial ring over a field and $I$ is a homogeneous ideal of $R$, they investigate the sheaf cohomology  $H^i(X_t, \mathcal{O}_{X_t}(j))$ where $X_t$ is the closed subscheme of ${\rm Proj}(R)$ given by $$X_t={\rm Proj}(R/I^t) \quad \mbox{for} \quad t\geq 1.$$ In their investigation, the injectivity of certain maps involving (graded components) of Ext and local cohomology modules plays a central role.  

Here, we focus on the surjectivity of certain maps involving local cohomology modules and inverse limits of such modules. We work for simplicity in the local setting but after standard adaptations our theory and results are seen to hold as well in the graded case, which will be fundamental to our application on the vanishing of cohomology of thickenings -- one of our main purposes here.

The technical core of our investigation is the study of such maps in order to extend a classical result of Peskine and Szpiro (see \cite{PS73}), concerning the interplay between the cohomological dimension and the height of an ideal in the Cohen-Macaulay setting (the result will be recalled in the sequel), to a much broader class of singularities to be introduced and investigated in details in this paper. 

Throughout this work, we adopt the convention that all rings are commutative, unitary and Noetherian.

Let $R$ be a ring, $I$ an ideal of $R$ and $M$ an $R$-module. Recall that the cohomological dimension of $M$ with respect to $I$ is defined as
$$
\cd(M,I) = \sup \{i \geq 0 \mid  H^i_I (M) \neq 0\}.
$$
The inequality $\cd(M,I) \leq \dim M$ holds in general and the equality occurs for example if $I$ is $\fm$-primary in case $(R, \fm)$ is local. 

When $R$ is a complete local domain, we can use the Hartshorne–Lichtenbaum vanishing theorem (\cite[Theorem 8.2.1]{BroSha}) to ensure that
$\cd(R, I) \leq \dim (R) - 1$ if and only if $I$ is not $\fm$-primary. Huneke and Lyubeznik (\cite{HunLyu90}) provide conditions for 
$\cd(R, I) \leq \dim R - 2$ , where $R$ is a regular local ring containing a field.

In their landmark paper, Peskine and Szpiro (\cite{PS73}) proved that whenever $R$ is a regular local ring containing a field of positive characteristic
and the ring $R/I$ is Cohen–Macaulay,
the local cohomology module $H^i_I (R)$ vanishes for all $i > \dim R - \depth R/I$.
In this case, $$\dim R - \depth R/I = \Ht I,$$ which means that $\cd (R,I)= \Ht I$ as  $H^{\Ht I}_I (R) \neq 0$ (in this paper, "${\rm ht}$" denotes height). In other words, by a standard terminology, $I$ is a cohomologically complete intersection ideal.

On the other hand, to the best of our knowledge, the equality
\begin{equation} \label{cdequality}
\cd(R, I) = \dim R - \depth R/I
\end{equation}
holds true in the following special cases:
\begin{enumerate}
    \item [(a)] Square-free monomial ideals in a polynomial ring (see \cite[Theorem 1(iv)]{Lyubeznik84}).
\item [(b)] $R$ is a regular local ring  equipped with a flat local endomorphism $R \rightarrow R$ inducing a pure ring endomorphism $R/I \rightarrow R/I$ (see \cite{EghbaliBoix2023}).
\item [(c)] Cohomologically full rings with small depth and under some extra ring-theoretic conditions  (see \cite[Proposition 2.6]{DDM21}).
\end{enumerate}
   
Our first aim here is to generalize all such cases by means of a big family of singularities, dubbed {\it cohomologically Mittag-Leffler rings} herein, which we will show to contain, in our setting (namely, $R$ is either a regular local ring or a standard graded polynomial ring over a field, fitting into a cofinal flat triple in the sense of Definition \ref{cft}), the class of cohomologically full rings introduced by Dao, Stefani and Ma (which in turn is known to contain the Cohen-Macaulay, Stanley-Reisner, and Du Bois singularities), see Theorem \ref{cful}, and also the rings $R$ possessing an ideal $I$ inducing a pure map $R/I\rightarrow  R/I$; see Theorem \ref{cci-pure}. This will provide us a more general environment for the equality (\ref{cdequality}), thus yielding a generalization (see Theorem \ref{cddepth}) of the classical Peskine-Szpiro theorem originally stated in the Cohen-Macaulay case. 

Furthermore, we want to consider conditions under which the property (\ref{cdequality}) is transferred from one ideal to another; first in Proposition \ref{depth2} we study the invariance of depth and hence (by Theorem \ref{cddepth}) the invariance of (\ref{cdequality}). We then prove Theorem \ref{surj.lc} concerning the transference of the cML property, and in Corollary \ref{Ffull} we connect the cML property to the F-full property of Bhatt, Blickle, Lyubeznik, Singh and Zhang (see \cite{LP}). Along the way, our Corollary \ref{depth} generalizes Dao, De Stefani and Ma \cite[Corollary 2.8]{DDM21} on the interplay between projective dimension and minimal number of generators. 
We also devote a subsection, for completeness, to establishing some regularity criteria in prime characteristic, as the
 background of the cML concept is essentially a cofinal flat triple $(R, I, \varphi)$, where $R$ is assumed to be regular. Here we take $\varphi$ as the Frobenius map of $R$.

In the last section, we provide an application (see Corollary \ref{mainapp}) concerning the vanishing of sheaf cohomology for the thickenings of a Cohen-Macaulay (hence cohomologically Mittag-Leffler; see Theorem (\ref{cmCMLR})) closed subscheme of a projective space over a field of positive characteristic, mostly motivated by \cite{LP} as mentioned in the first paragraph. Several examples will be given.

So, in essence, this paper is about the  class of cohomologically Mittag-Leffler rings -- as a generalization of some well justified classes of rings -- and some of its applications.

\section{Preliminary Remarks}\label{Notation and Remarks}

In this section, we state some preliminary results and definitions that will be used along the paper. We fix a ring $R$ together with and ideal $I$ of $R$ and an $R$-module $M$.

\subsection{Formal local cohomology}

Let $(R, \fm)$ be a local ring. For each $i$, the projective limit $\displaystyle\vpl_t H^{i}_{\fm}(M/I^tM)$ is called the \textit{$i$-th formal local cohomology of $M$ with
respect to $I$}. Here, the projective system is induced by the natural epimorphisms $M/I^{t+1}M \to M/I^tM$. The formal local cohomology has been a source of investigation of many authors (see, e.g., \cite{formal}).

We will concentrate our attention on the case where $M=R$. Let us assume in addition that $R$ is an $n$-dimensional Cohen-Macaulay local ring possessing a canonical module $\omega_R$.
There is a duality relation between the $i$-th formal local cohomology module $\displaystyle\vpl_t H^{i}_{\fm}(R/I^t)$ and $H^{n-i}_I (R)$. Even more generally, this relation can be described in terms of any decreasing chain of ideals $\{I_t\}_{t \geq 0}$ cofinal with $\{I^t\}_{t \geq 0}$. Indeed, for any such chain, there is an isomorphism 
\begin{equation}\label{iso1}
H^{n-i}_I (\omega_R) \cong \left(\displaystyle\vpl_t H^{i}_{\fm}(R/I_t)\right)^{\vee}, 
\end{equation} where $(-)^{\vee}$ stands for Matlis dual. The reason why this is true comes from the fact that a local cohomology module can be alternatively expressed as
\begin{equation}\label{lc}
H^{i}_{I}(M)\cong \displaystyle\vil_t \Ext^{i}_R (R/I_t, M)
\end{equation}
together with local duality. Indeed,
\begin{equation}\label{isoauxiliar}
H_I^{n-i} (\omega_R) \cong \displaystyle\vil_t \Ext_R^{n-i} (R/I_t, \omega_R)
 \cong \displaystyle\vil_t  (H^{i}_{\fm}(R/I_t))^{\vee} \cong \left(\displaystyle\vpl_t H^{i}_{\fm}(R/I_t)\right)^{\vee},
\end{equation}
where, in the last isomorphism, we use the fact that contravariant $\Hom$ transforms inverse limits into direct limits.
As a byproduct, (\ref{iso1}) shows that
\begin{equation}\label{iso2}
\displaystyle\vpl_t H^{i}_{\fm}(R/I^t) \cong \displaystyle\vpl_t H^{i}_{\fm}(R/I_t).    
\end{equation}
We can read this as the version of (\ref{lc}) for formal local cohomology.
\begin{remark}\label{rmkisoauxiliar}
    With the same notation, we can use similar arguments to get an isomorphism
    $$
    \left(H^{n-i}_I (\omega_R)\right)^{\vee} \cong \displaystyle\vpl_t H^{i}_{\fm}(R/I_t).
    $$
    Indeed, using (\ref{isoauxiliar}), we can write
    $$
    \left(H^{n-i}_I (\omega_R)\right)^{\vee} \cong \left( \displaystyle\vil_t  (H^{i}_{\fm}(R/I_t)^{\vee} \right)^{\vee} \cong \displaystyle\vpl_t H^{i}_{\fm}(R/I_t)^{\vee \vee} \cong \displaystyle\vpl_t H^{i}_{\fm}(R/I_t).
    $$
\end{remark}
\subsection{Local endomorphisms and a bimodule structure}
Our set up here is a local ring $(R, \fm)$ and  a local endomorphism $\varphi: R \to R$.
 We start by attaching to $R$ an $(R,R)$-bimodule structure by means of $\varphi$. In explicit terms,
for any $r, r_1, r_2 \in R$, we put
$$r_1.(\varphi_{\ast}r).r_2:=\varphi_{\ast}(\varphi (r_1)rr_2).$$ With such a bimodule structure, $R$ will be denoted  $\varphi_{\ast}R$.

Now, let $\Phi$ be the functor on the category of $R$-modules given by 
\begin{equation}\label{functorPhi}
\Phi(M)=\varphi_{\ast} R \otimes_{R} M.
\end{equation}
The iteration $\Phi^t$ is the functor
$$\Phi^t(M)=\varphi_{\ast} R \otimes_R \Phi^{t-1}M,\quad  t \geq 1,$$
where $\Phi^0$ is interpreted as the identity functor. In particular, $\Phi^t(M)=\varphi^t_{\ast} R \otimes_R M.$
The key property here is the fact that the flatness of $\varphi$ is equivalent to the exactness of $\Phi$, as it can be easily verified. 
Under this hypotheses, we summarize some further consequences we shall use along the text.
\begin{itemize}
    \item[(a)]We have a natural isomorphism $\Phi(R) \cong R$ given by $\varphi_{\ast}r' \otimes r \mapsto \varphi (r)r'$.
    \item[(b)] If $M$ and $N$ are $R$-modules, then, as shown in \cite[2.6.1]{SinghWalther2007},  there are natural isomorphisms
\begin{equation}\label{isoext}
\Phi(\Ext^i_R(M,N)) \cong \Ext^i_R(\Phi(M), \Phi(N)), \quad \mbox{for\, all} \quad i \geq 0.
\end{equation}
 \item[(c)] Assuming in addition that $\{\varphi^t(I)R\}_{t \geq 0}$ forms a decreasing chain of ideals cofinal with $\{I^t\}_{t \geq 0}$, we can use \cite[p.\,291]{SinghWalther2007} to obtain isomorphisms
\begin{equation}\label{Phicohomology}
\Phi(H^i_{I}(R)) \cong H^i_{\varphi(I)R}(R)\cong H^i_{I}(R), \quad \mbox{for\, all} \quad i \geq 0.
\end{equation} 
\end{itemize}

\section{Cohomologically Mittag-Leffler rings}\label{sec3}
In this section we introduce the main object of this work, the $\varphi$-\textit{cohomologically Mittag-Leffler rings} ($\varphi$-cML rings, for short). We first discuss the relation between the $\varphi$-cML property and the surjectivity of the natural maps
$$
H^i_{\fm} (R/I^{t+1}) \to H^i_{\fm} (R/I^t). 
$$
The stability of this concept under $\fm$-adic completion is proved in this section, and a relation with the set of associated primes of local cohomology modules is also presented. We also show that the class of $\varphi$-cML rings contains some important classes of rings such as Stanley-Reisner rings, some pure rings, and cohomologically full rings.

Throughout this section, $R$ is either a regular local ring or a standard graded polynomial ring (over a field) with irrelevant ideal $\fm$, and $I$ is an ideal of $R$ (homogeneous if $R$ is graded). Moreover, $\varphi:R \to R$ denotes either a local ring morphism (in the local case) or a graded morphism (in the graded setting).
\subsection{Definition and basic properties}

We start with the notion of cofinal flat triples.

\begin{definition}\label{cft}
 We call $(R,I,\varphi)$ a \textit{cofinal flat triple} if $\varphi$ is a flat morphism and $\{\varphi^t(I)R\}_{t \geq 0}$ is a decreasing chain of ideals cofinal with $\{I^t\}_{t \geq 0}$.   
\end{definition}
\begin{example}\label{char p example}
    The archetypal example of a cofinal flat triple relies in positive characteristic. More precisely, suppose that $R$ has  prime characteristic $p>0$. If $F: R \to R$ stands for the Frobenius endomorphism (the map $x \mapsto x^p$), then a classical result due to Kunz (\cite[Theorem 2.1 and Corollary 2.7]{Kunz1969}) establishes that $F$ is a flat local endomorphism. Moreover, for any ideal $I$ of $R$, $F^e (I)R = I^{[p^e]}$ is a Frobenius power of $I$. In particular, given $t\geq 0$, $F^e (I)R \subset I^t$ for $e \gg0$ and $$I^{\mu(I)p^e} \subset F^e (I)R$$ where $\mu(-)$ denotes minimal number of generators.
    Therefore, $(R,I, F)$ is a cofinal flat triple.
\end{example}
\begin{example}\label{monomial example}
    Let $R$ be the polynomial ring $k[x_1, \ldots,x_n]$ and consider the $k$-linear endomorphism $\varphi: R \to R$, $x_i \mapsto x_i^d$ for a fixed positive integer $d$.
    As $R$ is a free module over $\varphi(R)$, $\varphi$ is a flat graded endomorphism. Moreover, for every square-free monomial ideal $I$, it is easy to check that $\{\varphi^t(I)R\}_{t \geq 0}$ is a descending chain of ideals cofinal with $\{I^t\}_{t \geq 0}$. Therefore, $(R, I, \varphi)$ is a cofinal flat triple.
    
\end{example}
\begin{definition}
Let $(R,I, \varphi)$ be a cofinal flat triple.
 The quotient ring $R/I$ is dubbed \textit{$\varphi$-cohomologically Mittag-Leffler} (abbreviated by $\varphi$-cML) if, for each $i$, the natural map
\begin{equation}\label{CMLR}
\displaystyle\vpl_t H^{i}_{\fm}(R/\varphi^t(I)R) \rightarrow H^{i}_{\fm}(R/\varphi^{s}(I)R)
\end{equation}
is an epimorphism for every $s$.
\end{definition}
For convenience we sometimes omit the map $\varphi$ from the notation by writing simply cML.
Quite generally, when all maps $M_j \to M_i$ in an inverse system of modules are epimorphisms, each of the natural projections $ \displaystyle\vpl_t M_t \to M_s$ is an epimorphism as well (see \cite[Lemma 3.5.3]{We}).
In particular, $R/I$ is $\varphi$-cML provided that, for each $i$, the natural map 
$$
H^i_{\fm} (R/\varphi^t(I)R) \to H^i_{\fm} (R/\varphi^s (I)R)
$$
is an epimorphism for all $s,t$.

On the other hand, the weaker hypothesis of the surjectivity of all maps
$$
H^i_{\fm} (R/I^t) \to H^i_{\fm} (R/I^s)
$$
is already sufficient to ensure that $R/I$ is $\varphi$-cML, as we observe in the next proposition.

\begin{proposition}
Let $(R,I,\varphi)$ be a cofinal flat triple. If, for each $i$, the natural map
\begin{equation}\label{1}
H^{i}_{\fm}(R/I^{t+1}) \rightarrow H^{i}_{\fm}(R/I^t),
\end{equation}
is an epimorphim for all $t$, then $R/I$ is $\varphi$-cML.
\end{proposition}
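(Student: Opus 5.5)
The plan is to transport the hypothesis through the flat functor $\Phi^s$ and then use that inverse limits along interleaved (cofinal) systems of ideals agree.

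\emph{Step 1: the case $s=0$.} By hypothesis every transition map in $\{H^i_{\fm}(R/I^t)\}_t$ is surjective, so by \cite[Lemma 3.5.3]{We} each projection
$$\pi_0\colon \vpl_t H^i_{\fm}(R/I^t)\to H^i_{\fm}(R/I)$$
is surjective.

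\emph{Step 2: identification via $\Phi^s$.} Fix $s$. Since $\varphi^s$ is flat, $\Phi^s=\varphi^s_\ast R\otimes_R -$ is exact. Local cohomology commutes with flat base change, which gives
$$\Phi^s\bigl(H^i_{\fm}(R/J)\bigr)\cong H^i_{\varphi^s(\fm)R}\bigl(R/\varphi^s(J)R\bigr).$$
Next I need $H^i_{\varphi^s(\fm)R}=H^i_{\fm}$, i.e. that $\varphi^s(\fm)R$ is $\fm$-primary.
\begin{itemize}
\item For Frobenius this is clear, since $\varphi^s(\fm)R=\fm^{[p^s]}$.
\item In general, $\varphi$ is flat and local, so the dimension formula $\dim R=\dim R+\dim R/\varphi(\fm)R$ forces the closed fibre to be Artinian.
\end{itemize}
Hence, naturally in the ideal $J$,
$$\Phi^s\bigl(H^i_{\fm}(R/J)\bigr)\cong H^i_{\fm}\bigl(R/\varphi^s(J)R\bigr).$$
Taking $J=I^t$ and using $\varphi^s(I^t)R=(\varphi^s(I)R)^t$, this gives
$$H^i_{\fm}\bigl(R/\varphi^s(I)R\bigr)\cong \varphi^s_\ast R\otimes_R H^i_{\fm}(R/I),$$
compatibly with the transition maps.

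\emph{Step 3: lifting generators.} Put $J_s=\varphi^s(I)R$. An element of $H^i_{\fm}(R/J_s)$ is a finite sum of elements $r\otimes y$ with $y\in H^i_{\fm}(R/I)$. By Step 1, lift $y$ to a compatible family $(y_t)_t$ with $y_t\in H^i_{\fm}(R/I^t)$. Then $(r\otimes y_t)_t$ is a compatible family in
$$\vpl_t H^i_{\fm}\bigl(R/J_s^{\,t}\bigr)$$
that maps to $r\otimes y$ at $t=1$. Note that $\Phi^s$ is never applied to an inverse limit; only to the individual entries $y_t$, which avoids the failure of tensor products to commute with $\vpl$. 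So the projection
$$\vpl_t H^i_{\fm}\bigl(R/J_s^{\,t}\bigr)\to H^i_{\fm}(R/J_s)$$
is surjective.

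\emph{Step 4: comparison of limits.} It remains to identify $\vpl_t H^i_{\fm}(R/J_s^{\,t})$ with $\vpl_t H^i_{\fm}(R/\varphi^t(I)R)$, compatibly with the projections to $H^i_{\fm}(R/J_s)$. The chain is decreasing, so $J_s\subseteq I$ and hence $J_s^{\,t}\subseteq I^t$. By cofinality, $I^N\subseteq J_s$ for some $N$, so $I^{Nt}\subseteq J_s^{\,t}$. Thus $\{J_s^{\,t}\}_t$ is interleaved with $\{I^t\}_t$, which in turn is interleaved with $\{\varphi^t(I)R\}_t$. Interleaved inverse systems have canonically isomorphic limits; this is the purely formal version of (\ref{iso2}) and needs neither a canonical module nor Cohen--Macaulayness. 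Moreover, $H^i_{\fm}(R/J_s)$ occurs in both systems, at $t=1$ and at $t=s$ respectively, and the canonical isomorphism commutes with the two projections to it. Together with Step 3, this shows that the map (\ref{CMLR}) is surjective, as required.

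The main obstacle is Step 2: checking that flat base change along $\varphi^s$ really lands in $H^i_{\fm}$, i.e. that $\varphi^s(\fm)R$ is $\fm$-primary, and that the resulting isomorphism is natural in the ideal, so that the lifted families stay compatible. In the graded setting, the analogous point is that $\varphi^s(\fm)R$ is primary to the irrelevant ideal.
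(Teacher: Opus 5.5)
Your proof is correct, but it takes a different route from the paper.

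The paper applies $\Phi^t$ to the given epimorphisms. From this it concludes directly that every transition map $H^i_{\fm}(R/\varphi^{t+1}(I)R)\to H^i_{\fm}(R/\varphi^{t}(I)R)$ of the $\varphi$-chain is onto, and then invokes Weibel's Mittag-Leffler lemma once.

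Strictly speaking, applying $\Phi^t$ to $H^i_{\fm}(R/I^{k+1})\to H^i_{\fm}(R/I^k)$ gives maps from $H^i_{\fm}(R/(\varphi^t(I)R)^{k+1})$ to $H^i_{\fm}(R/(\varphi^t(I)R)^{k})$, exactly as your Step 2 records. So the paper's one-line deduction tacitly needs one more observation: that $H^i_{\fm}(R/\varphi(I)R)\to H^i_{\fm}(R/I)$ is onto. This holds because you can choose $N$ with $I^N\subseteq\varphi(I)R\subseteq I$. The surjection $H^i_{\fm}(R/I^N)\to H^i_{\fm}(R/I)$, a composite of the given epimorphisms, then factors through $H^i_{\fm}(R/\varphi(I)R)$. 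Applying $\Phi^t$ to that map yields the desired transition map.

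You instead keep the powers and obtain surjectivity of $\vpl_t H^i_{\fm}(R/J_s^{\,t})\to H^i_{\fm}(R/J_s)$. You then transfer it to the $\varphi$-chain by identifying limits over mutually cofinal chains. This identification is sound: the combined system is directed because every ideal in either chain contains a power of $I$, and each chain is cofinal in it. Hence the two projections to the common term $H^i_{\fm}(R/J_s)$ agree.

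Comparing the two:
\begin{itemize}
\item The paper's route, once repaired, is shorter. It also gives the stronger conclusion that the $\varphi$-system itself has surjective transition maps.
\item Your route avoids the factorization trick. It is explicit that $\Phi^s$ sends the $I$-adic system to the $\varphi^s(I)R$-adic system rather than to the $\varphi$-chain, at the cost of the formal limit comparison.
\end{itemize}

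Your element-wise lifting in Step 3 can be shortened. Since $\Phi^s$ is right exact, the maps $H^i_{\fm}(R/J_s^{\,t+1})\to H^i_{\fm}(R/J_s^{\,t})$ are already onto, and the Mittag-Leffler lemma applies directly.
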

\begin{proof}
By the dimension formula,
$$
\dim R + \dim R/ \varphi(\fm)R = \dim R.
$$
Therefore, $\varphi(\fm)R$ is $\fm$-primary. Now, let $\Phi$ be the functor as in (\ref{functorPhi}). After applying the exact functor $\Phi^t(-)$ to (\ref{1}) we deduce that, for each $i$, the map
$$
H^{i}_{\fm}(R/\varphi^{t+1}(I)R) \rightarrow H^{i}_{\fm}(R/\varphi^t(I)R)
$$
is an epimorphism for every $t$.
Hence, using \cite[Lemma 3.5.3]{We} once again, we conclude that $R/I$ is $\varphi$-cML.
\end{proof}

Next, we study the invariance of the $\varphi$-cML property with respect to $\fm$-adic completion. 

\begin{proposition}\label{completionprop} \label{completion} Let $(R,I, \varphi)$ be a cofinal flat triple. Then $R/I$ is  $\varphi$-cML if and only if $\widehat{R}/I\widehat{R}$ is $\widehat{\varphi}$-cML.
\end{proposition}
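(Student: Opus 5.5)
The plan is to compare the two inverse systems term by term, using the fact that modules supported at $\fm$ do not change under completion. Concretely, $H^i_{\fm}(N)$ is $\fm$-torsion for every $R$-module $N$, and the natural map $H^i_{\fm}(N)\to H^i_{\fm}(N)\otimes_R\widehat{R}$ is an isomorphism. Flat base change then gives
$$
H^i_{\fm}(N)\otimes_R\widehat{R}\cong H^i_{\fm\widehat{R}}(N\otimes_R\widehat{R}),
$$
and the $\widehat{R}$-module on the right is the same abelian group, with the $R$-action obtained by restriction. We apply this with $N=R/\varphi^t(I)R$. Then $N\otimes_R\widehat{R}=\widehat{R}/\varphi^t(I)\widehat{R}$, and we must check that $\varphi^t(I)\widehat{R}=\widehat{\varphi}^t(I\widehat{R})\widehat{R}$. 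This holds because $\widehat{\varphi}$ is the continuous extension of $\varphi$ and $I\widehat{R}$ is generated by the generators of $I$. These identifications are natural in $N$, so they commute with the transition maps induced by the surjections $R/\varphi^{t+1}(I)R\to R/\varphi^t(I)R$.

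Next, I need $(\widehat{R},I\widehat{R},\widehat{\varphi})$ to be a cofinal flat triple, so that the statement makes sense.
\begin{itemize}
\item[(a)] \emph{Regularity.} $\widehat{R}$ is regular because $R$ is.
\item[(b)] \emph{Cofinality.} Containments $\varphi^t(I)R\subseteq I^s$ and $I^{s'}\subseteq \varphi^{t'}(I)R$ extend to $\widehat{R}$, and the chain stays decreasing.
\item[(c)] \emph{Flatness.} This is the step I expect to be the main obstacle. I would use the local flatness criterion. Since $\varphi$ is flat, $\varphi(\fm)R$ is $\fm$-primary (as in the preceding proof), so $\widehat{\varphi}$ is the $\fm$-adic completion of a flat local map with $\fm$-primary expansion of the maximal ideal. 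For each $k$, $\widehat{R}/\fm^k\widehat{R}\cong R/\fm^k$ and $\widehat{R}/\widehat{\varphi}(\fm)^k\widehat{R}\cong R/\varphi(\fm)^kR$. Flatness of $R/\varphi(\fm)^kR$ over $R/\fm^k$ therefore transfers, and the local flatness criterion for Noetherian local rings yields flatness of $\widehat{\varphi}$.
\end{itemize}
In the graded setting no completion is needed, or one can localize at $\fm$ first.

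With these identifications, the inverse system $\{H^i_{\fm\widehat{R}}(\widehat{R}/\widehat{\varphi}^t(I\widehat{R})\widehat{R})\}_t$ is isomorphic, as an inverse system of abelian groups, to $\{H^i_{\fm}(R/\varphi^t(I)R)\}_t$. Hence the inverse limits agree, and the natural maps
$$
\vpl_t H^{i}_{\fm}(R/\varphi^t(I)R)\to H^{i}_{\fm}(R/\varphi^{s}(I)R)
$$
correspond to the analogous maps over $\widehat{R}$ under these isomorphisms. Surjectivity is a property of the underlying abelian group homomorphism, so one map is an epimorphism exactly when the other is, for every $i$ and $s$. This proves both implications simultaneously.
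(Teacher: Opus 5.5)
Your proof is correct, but it takes a different route from the paper's.

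The paper proves flatness of $\widehat{\varphi}$ by identifying $\widehat{\varphi}(\widehat{R})$ with the completion of $\varphi(R)$ and appealing to an exercise in Matsumura. It calls cofinality easy. It then gets the equivalence in one step by citing Schenzel's Proposition 3.3 for the comparison with the completion.

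You do the comparison by hand. Each $H^i_{\fm}(R/\varphi^t(I)R)$ is $\fm$-torsion, so flat base change, together with the identity $\widehat{\varphi}^t(I\widehat{R})\widehat{R}=\varphi^t(I)\widehat{R}$, gives an isomorphism of the whole inverse systems, natural in $t$. You prove flatness of $\widehat{\varphi}$ with the local flatness criterion. Because $\varphi(\fm)R$ is $\fm$-primary, the map $\widehat{R}/\fm^k\widehat{R}\to\widehat{R}/\widehat{\varphi}(\fm)^k\widehat{R}$ is just $R/\fm^k\to R/\varphi(\fm)^kR$. You should state explicitly why that map is flat: it is the base change of $\varphi$ along $R\to R/\fm^k$, since $\varphi(\fm^k)R=\varphi(\fm)^kR$.

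What your approach buys is self-containedness and precision. The cML condition is about the projections from the inverse limit onto the individual terms $H^i_{\fm}(R/\varphi^s(I)R)$. An isomorphism of inverse systems transports the limits and these projection maps at the same time, which is exactly what is needed. The paper's approach is shorter, but it relies on references for both the flatness of $\widehat{\varphi}$ and the comparison step.
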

\begin{proof} 
Notice that $$\widehat{\varphi}:\widehat{R} \longrightarrow \widehat{R},\ \widehat{\varphi}(\{r_t+\fm^t\}_t)= \{\varphi(r_t) +\varphi(\fm)^tR\}_t$$
is a flat local endomorphism since we have
$$\widehat{\varphi(R)}=\displaystyle\vpl_t  \varphi(R)/\fm^t \varphi(R)=\displaystyle\vpl_t  \varphi(R)/ \varphi^t(\fm)R=\displaystyle\vpl_t  \varphi(R/\fm^t)=\widehat{\varphi}(\widehat{R}).$$
The last equality follows by definition of $\widehat{\varphi}$. So by \cite[Exersise 7.1]{Matsumura87}, we conclude
that $\widehat{\varphi(R)}$ is a flat $\widehat{R}$-module. Also, it is easy to see that $\{\widehat{\varphi}^t(\widehat{I})\widehat{R}\}_t$ is a decreasing
chain of ideals cofinal with $\{\widehat{I}^t\}_t$. Now the claim follows from \cite[Proposition 3.3]{Schenzel2007}.
\end{proof}

Recall that if $(R,I, \varphi)$ is a cofinal flat friple such that $R/I$ is  $\varphi$-cML, then, in the light of Remark (\ref{rmkisoauxiliar}), the surjectivity (for each $i$) of 
$$\displaystyle\vpl_t H^{i}_{\fm}(R/\varphi^t(I)R) \rightarrow H^{i}_{\fm}(R/\varphi^{t}(I)R) \quad \mbox{for\, all} \quad t$$
is equivalent to the the surjectivity (for each $i$) of 
\begin{equation}\label{main}
\left(H^{n-i}_I(R)\right)^{\vee} \rightarrow H^{i}_{\fm}(R/\varphi^{t}(I)R) \quad \mbox{for\, all} \quad t, 
\end{equation} where $n={\rm dim}\,R$.
From this equivalence, we shall see how we may derive some new results on the set of  associated primes of local cohomology modules.

At this point, the theory of minimal primary decomposition and attached primes comes to play. We must use the fact that $\Ass_RM = \Att_R(M^{\vee})$ for every $ R$-module $M$. Further details can be found for instance in \cite{Hellus2007}.

\begin{theorem} \label{Ass}
Let $(R,I, \varphi)$ be a cofinal flat triple. Suppose that $R/I$ is  $\varphi$-cML. Then, for each $i$, $$\Ass_R (\Ext^i_R(R/\varphi^t(I)R,R)) \subseteq \Ass_R H^i_I(R) \quad \mbox{for\, all} \quad t.$$
\end{theorem}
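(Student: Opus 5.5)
We would prove Theorem \ref{Ass} by Matlis duality, turning the surjection (\ref{main}) into an injection of Ext modules into local cohomology, and then using $\Ass_R M=\Att_R(M^{\vee})$.

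First, reduce to the complete case. Associated primes of a finitely generated module, and of $H^i_I(R)$, behave well under the faithfully flat extension $R\to\widehat R$: a prime $\mathfrak p$ lies in $\Ass_R M$ iff some prime of $\widehat R$ over it lies in $\Ass_{\widehat R}(M\otimes\widehat R)$. Together with Proposition \ref{completionprop}, this lets us assume $R$ is complete. Alternatively, we can avoid completion by working only with Matlis duals over $R$ and attached primes, as in \cite{Hellus2007}. Since $R$ is regular, $\omega_R\cong R$ and $n=\dim R$.

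Next, fix $i$ and $t$ and put $j=n-i$. By hypothesis together with Remark \ref{rmkisoauxiliar}, the natural map
$$\bigl(H^{j}_I(R)\bigr)^{\vee}\cong \vpl_s H^{i}_{\fm}(R/\varphi^s(I)R)\longrightarrow H^{i}_{\fm}(R/\varphi^t(I)R)$$
is surjective. By local duality, $H^i_{\fm}(R/\varphi^t(I)R)\cong \Ext^{j}_R(R/\varphi^t(I)R,R)^{\vee}$. We then check that the surjection above is the Matlis dual of the natural map
$$\Ext^{j}_R(R/\varphi^t(I)R,R)\to \vil_s\Ext^{j}_R(R/\varphi^s(I)R,R)\cong H^{j}_I(R),$$
using (\ref{lc}) and the cofinality in Definition \ref{cft}. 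This compatibility comes from the naturality of the isomorphisms in (\ref{isoauxiliar}). Since the Ext module is finitely generated and $R$ is complete, it is Matlis reflexive. The Matlis dual of a map is surjective iff the map is injective, by exactness and faithfulness of $(-)^\vee$. Hence
$$\Ext^{j}_R(R/\varphi^t(I)R,R)\hookrightarrow H^{j}_I(R)$$
is injective, and so $\Ass_R\Ext^{j}_R(R/\varphi^t(I)R,R)\subseteq \Ass_R H^j_I(R)$. As $i$ ranges over all indices, so does $j=n-i$, which gives the statement for each index.

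One could instead phrase the last step with attached primes: a quotient $N$ of $M^\vee$ has $\Att N\subseteq \Att M^\vee=\Ass M$, and $\Att(\Ext^\vee)=\Ass\Ext$. I would use the injectivity formulation, since it is cleaner.

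The main obstacle is the compatibility check. We must verify that the surjection given by the cML hypothesis really is the dual of the canonical map into the direct limit, and not merely an abstract surjection between isomorphic modules. This requires tracing the naturality of local duality and of $\Hom(\vil,E)\cong\vpl\Hom(-,E)$ through (\ref{isoauxiliar}). A second point needing care is the descent of associated primes along completion; the Matlis-duality step itself is routine.
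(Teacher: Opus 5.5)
Your argument is correct, but it applies the duality on the opposite side from the paper.

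\textbf{The paper's route.} The paper never dualizes back. It takes the surjection $(H^i_I(R))^\vee\twoheadrightarrow H^{n-i}_\fm(R/\varphi^t(I)R)\cong\Ext^i_R(R/\varphi^t(I)R,R)^\vee$, obtained from (\ref{main}) and local duality. It then concludes with attached primes: a quotient has attached primes contained in those of the source, and $\Ass_R M=\Att_R(M^\vee)$ for every module $M$ (\cite[Exercise 7.2.6]{BroSha}, \cite{Hellus2007}). This works for any abstract surjection, so no compatibility check is needed. The cost is that it relies on attached primes for the non-Artinian module $(H^i_I(R))^\vee$.

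\textbf{Your route.} You identify the cML surjection with the Matlis dual of the canonical map $X_t\to\vil_s X_s\cong H^j_I(R)$, where $X_s=\Ext^j_R(R/\varphi^s(I)R,R)$. This identification does hold, for two reasons. Local duality is natural in the module. Also, under $\Hom_R(\vil_s X_s,E)\cong\vpl_s\Hom_R(X_s,E)$, the projection $\vpl_s X_s^\vee\to X_t^\vee$ is restriction along $X_t\to\vil_s X_s$. After that you need only that $E$ is an injective cogenerator and that $\Ass$ is monotone on submodules.

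\textbf{What each buys.} Your approach is more elementary, and it gives the stronger conclusion that the natural maps $\Ext^j_R(R/\varphi^t(I)R,R)\to H^j_I(R)$ are injective. This is exactly the injection $\psi^\vee_{t,i}$ the paper later invokes in the proof of Theorem \ref{main-for-app}. Combined with the reverse dualization in the proof of Theorem \ref{cful}, it shows that $\varphi$-cML is equivalent to injectivity of all these maps.

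Two of your steps are unnecessary:
\begin{enumerate}
\item The reduction to the complete case: local duality $H^i_\fm(M)\cong\Ext^{n-i}_R(M,R)^\vee$ already holds over any regular local ring.
\item The appeal to Matlis reflexivity of the Ext module: the implication from surjectivity of $f^\vee$ to injectivity of $f$ holds over any local ring, because $E_R(R/\fm)$ is an injective cogenerator.
\end{enumerate}
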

\begin{proof}
Fix an integer $i$. By using the epimorphisms (\ref{main}) together with local duality, and in the light of \cite[Exercise 7.2.6]{BroSha}, we get 
$$\Ass_R (\Ext^i_R(R/\varphi^t(I)R,R))= \Att_R (H^{n-i}_{\fm}(R/\varphi^t(I)R)) \subseteq \Att_R({H^i_I(R)}^{\vee})=\Ass_R H^i_I(R),$$
for all $t$. Hence, the claim is proved.
\end{proof}
The finiteness of the set of associated primes of a local cohomology module is a classical problem in commutative algebra. Huneke and Sharp (see \cite[Corollary 2.3]{HunekeSharp}) proved that $\Ass_R H^i_I (R)$ is finite for every $i$ whenever $R$ is a regular ring of positive characteristic. Moreover, they proved an inclusion
$\Ass_R (\Ext^i_R(R/I,R)) \subseteq \Ass_R H^i_I(R)$.
In the next corollary, we derive (particularly from Theorem (\ref{Ass})) a situation where this inclusion becomes an equality.

\begin{corollary}
Let $(R,\fm)$ be a regular local ring of positive characteristic. If $R/I$ is $F$-pure $($see Definition (\ref{pure})$)$, then there is an equality $$\Ass_R (\Ext^i_R(R/I,R))= \Ass_R H^i_I(R),\ \text{\ for\ all\ }i.$$
\end{corollary}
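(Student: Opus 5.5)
We prove the two inclusions separately. The inclusion $\Ass_R(\Ext^i_R(R/I,R))\subseteq \Ass_R H^i_I(R)$ is the result of Huneke and Sharp quoted above. It would also follow from Theorem \ref{Ass} with $t=0$, once we know $R/I$ is $F$-cML. So the work is twofold: establish that $F$-purity of $R/I$ forces $R/I$ to be $F$-cML, and prove the reverse inclusion.

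For the cML property we use the triple $(R,I,F)$, which is cofinal flat by Example \ref{char p example}. Purity of $R/I \to R/I$ under Frobenius means the induced map $R/I\to F_\ast(R/I)$ is pure. Since $\Phi(R/I)\cong R/I^{[p]}$, the natural surjection $R/I^{[p]}\to R/I$ composed with the Frobenius map $R/I\to R/I^{[p]}$ is exactly the Frobenius of $R/I$. Purity makes $H^i_\fm(R/I)\to H^i_\fm(F_\ast(R/I))$ injective, which is the standard injectivity for $F$-pure rings. We then dualize via local duality. Matlis duality turns $\Ext^{n-i}_R(R/I,R)$ into $H^i_\fm(R/I)$, and the Frobenius map on $\Ext^{n-i}_R(R/I,R)$ corresponds to $\Ext^{n-i}(R/I,R)\to \Ext^{n-i}(R/I^{[p]},R)\cong \Phi\Ext^{n-i}(R/I,R)$ by (\ref{isoext}). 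Injectivity of $H^i_\fm$ under Frobenius should then translate into surjectivity of $H^i_\fm(R/I^{[p]})\to H^i_\fm(R/I)$. Precisely, the composite $H^i_\fm(R/I)\to H^i_\fm(R/I^{[p]})\to H^i_\fm(R/I)$ equals the Frobenius action $H^i_\fm(R/I)\to H^i_\fm(R/I)$, which is injective, and bijectivity onto the target needs care.

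This step is the main obstacle. Injectivity of Frobenius on $H^i_\fm(R/I)$ alone does not give surjectivity of the restriction map, so we want a splitting instead. If $R$ is $F$-finite (for instance after completing, using Proposition \ref{completion}, and the complete case of $\Ass$ comparisons), $F$-purity equals $F$-splitting. A splitting $\theta:F_\ast(R/I)\to R/I$ lifts to a map realizing $R/I$ as a direct summand of $R/I^{[p]}$ compatibly with the projection, because $F_\ast(R/I)\cong R/I^{[p]}$ as modules via the flatness identification. Then each $H^i_\fm(R/I^{[p]})\to H^i_\fm(R/I)$ is split surjective. Applying $\Phi^{e}$ gives surjectivity of $H^i_\fm(R/I^{[p^{e+1}]})\to H^i_\fm(R/I^{[p^e]})$ for all $e$. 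By \cite[Lemma 3.5.3]{We} this yields the $F$-cML property.

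For the reverse inclusion we write $H^i_I(R)=\varinjlim_e \Ext^i_R(R/I^{[p^e]},R)$ by (\ref{lc}). The transition maps are injective: they are $\Phi^e$ applied to the dual of the split surjection above, and alternatively \cite{LP} gives this for $F$-pure rings. Hence $\Ass H^i_I(R)=\bigcup_e \Ass \Ext^i_R(R/I^{[p^e]},R)$. By (\ref{isoext}) each $\Ext^i_R(R/I^{[p^e]},R)\cong \Phi^e\Ext^i_R(R/I,R)$. Flatness of Frobenius preserves associated primes, since $\Ass F^\ast M=\Ass M$ for the Frobenius pullback on a regular ring. Therefore every set in the union equals $\Ass\Ext^i_R(R/I,R)$, which gives the equality.
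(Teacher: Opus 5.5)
The step you flag as the main obstacle is where the proof breaks, and the splitting argument you give for it is false. First, $F_\ast(R/I)$ (restriction of scalars along Frobenius) and $R/I^{[p]}=\Phi(R/I)$ (extension of scalars) are not isomorphic. For example, $x$ annihilates $F_\ast(k[[x]]/(x))$ but not $k[[x]]/(x^p)$.

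Second, for $0\neq I\subseteq\fm$ the projection $R/I^{[p]}\to R/I$ never splits. A section would send $1$ to some $\bar u$ with $u\equiv 1 \bmod I$ and $uI\subseteq I^{[p]}$. Then $u$ is a unit, so $I\subseteq I^{[p]}\subseteq \fm I$, which forces $I=0$.

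Third, even after applying $H^i_\fm$ there is no splitting in general. For $R=k[[x]]$, $I=(x)$, $i=0$, the map $R/(x^p)\to k$ is onto but not split, since the socle element $x^{p-1}$ maps to $0$.

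What an $F$-splitting of $R/I$ does give is that Frobenius acts (split) injectively on $H^i_\fm(R/I)$. As you observe yourself, that does not yield surjectivity of $H^i_\fm(R/I^{[p]})\to H^i_\fm(R/I)$, i.e.\ that $H^i_\fm(R/I)$ is the $R$-span of its Frobenius image. That surjectivity is a genuine theorem; dually, it is the injectivity of $\Ext^i_R(R/I^{[p^e]},R)\to \Ext^i_R(R/I^{[p^{e+1}]},R)$. The paper obtains it from Singh--Walther \cite[Theorem 2.8]{SinghWalther2007}, inside Theorem \ref{cci-pure}. Your parenthetical appeal to \cite{LP} closes the gap only if it is read as a citation of that result; the argument you actually write does not.

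This gap affects both inclusions, not just one. The inclusion $\Ass_R\Ext^i_R(R/I,R)\subseteq\Ass_R H^i_I(R)$ is not what Huneke--Sharp prove; the sentence preceding the corollary has the direction reversed. That inclusion fails without a hypothesis such as $F$-purity: for $I=(x^2,xy)\subset k[[x,y]]$ one has $\Ext^2_R(R/I,R)\cong k\neq 0$, while $H^2_I(R)=H^2_{(x)}(R)=0$.

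Huneke--Sharp's inclusion is $\Ass_R H^i_I(R)\subseteq\Ass_R\Ext^i_R(R/I,R)$. That is exactly what your last paragraph reproves, and it is how the paper uses \cite[Corollary 2.3]{HunekeSharp} alongside Theorem \ref{Ass} with $t=0$.

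Once the Singh--Walther injectivity is in place, your final paragraph is correct and gives the equality directly:
$\Ass_R H^i_I(R)=\bigcup_e\Ass_R\Ext^i_R(R/I^{[p^e]},R)=\Ass_R\Ext^i_R(R/I,R)$.
This bypasses the cML property and Theorem \ref{Ass} entirely. Separately, completing does not make $R$ $F$-finite unless the residue field is $F$-finite; the paper's route needs no such reduction.
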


\begin{proof}
 Since $R/I$ is $F$-pure, Theorem (\ref{cci-pure}) (to be proved in the next subsection) ensures that $R/I$ is  cML. Then we are done by Theorem (\ref{Ass}) and \cite[Corollary 2.3]{HunekeSharp}.
\end{proof}

\subsection{Two subclasses of the class of cML rings}
Our first example comes from the notion of pure maps, whose definition we now recall for completeness.

\begin{definition} \label{pure}
A ring homomorphism  $f:A \rightarrow B$ is pure if the map $f
\otimes 1:A \otimes_A M \rightarrow B \otimes_A M$  is injective for
each $A$-module $M$.  If $A$  contains a field of prime characteristic $p$, then $A$ is $F$-pure if the Frobenius endomorphism $F:A \rightarrow A$ is pure.
\end{definition}

\begin{remark} \label{pure2} 
 Consider the cofinal flat triple $(R,I,\varphi)$ as in Example (\ref{monomial example}). As $I$ is a square-free monomial ideal, $\varphi(I) \subset I$. So, we may consider the induced map
   $\bar{\varphi}:R/I \longrightarrow R/I$, which, by \cite[Example 2.2]{SinghWalther2007}, must be pure. 
\end{remark}
For more information, we refer the reader to \cite[Section 2]{EghbaliBoix2023}. Now we prove:

\begin{theorem} \label{cci-pure}
Let $(R,I,\varphi)$ be a cofinal flat triple. Suppose that $\varphi(I) \subset I$.
If the induced map $\bar{\varphi}:R/I \longrightarrow R/I$ is pure,
then $R/I$ is  $\varphi$-cML.
\end{theorem}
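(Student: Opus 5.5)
Since $\varphi(I)\subset I$, the chain $\{\varphi^t(I)R\}_{t\ge 0}$ is decreasing, and there are natural surjections $R/\varphi^{t+1}(I)R\to R/\varphi^t(I)R$. By the observation after the definition of cML (via \cite[Lemma 3.5.3]{We}), it suffices to show that for each $i$ and $t$ the induced map
$$H^i_{\fm}(R/\varphi^{t+1}(I)R)\to H^i_{\fm}(R/\varphi^{t}(I)R)$$
is an epimorphism. Equivalently, after Matlis/local duality on the regular ring $R$ (with $n=\dim R$, so $\omega_R=R$), we need the dual map
$$\Ext^{n-i}_R(R/\varphi^t(I)R,R)\to \Ext^{n-i}_R(R/\varphi^{t+1}(I)R,R)$$
to be injective. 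In the graded case we use graded local duality instead.

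The plan is to reduce to $t=0$ using the exact functor $\Phi$. By (\ref{isoext}), applying $\Phi^t$ to the map $\Ext^{j}_R(R/I,R)\to \Ext^{j}_R(R/\varphi(I)R,R)$ gives exactly the map $\Ext^j_R(R/\varphi^t(I)R,R)\to \Ext^j_R(R/\varphi^{t+1}(I)R,R)$. Here we use $\Phi(R/J)\cong R/\varphi(J)R$ and $\Phi(R)\cong R$, and we need to check that naturality is compatible with the surjection $R/\varphi(I)R\to R/I$. Since $\Phi$ is exact, injectivity for $t=0$ propagates to all $t$. Alternatively, we can apply $\Phi^t$ directly to the surjectivity statement on $H^i_{\fm}$, as in the proof of the previous proposition, using that $\varphi(\fm)R$ is $\fm$-primary.

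The key step is the case $t=0$: showing that $H^i_{\fm}(R/\varphi(I)R)\to H^i_{\fm}(R/I)$ is surjective. The map $\bar\varphi:R/I\to R/I$ factors as
$$R/I\xrightarrow{\ \alpha\ } R/\varphi(I)R\xrightarrow{\ \pi\ } R/I,$$
where $\alpha$ is the map $\bar r\mapsto \overline{\varphi(r)}$ (the map $R/I\to\varphi_*R\otimes_R R/I$) and $\pi$ is the natural projection. Applying $H^i_{\fm}$ and regarding $\bar\varphi$ as $\varphi$-semilinear, the composite $H^i_{\fm}(R/I)\to H^i_{\fm}(R/I)$ is the natural action induced by $\bar\varphi$. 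Purity of $\bar\varphi$ gives that the map $H^i_{\fm}(R/I)\to H^i_{\fm}(\bar\varphi_*(R/I))$ is injective (as in Hochster–Roberts, since local cohomology is a direct limit of Koszul cohomology, equivalently tensoring with the \v{C}ech complex). This yields injectivity, not surjectivity, so we dualize: pass to Ext. Purity of $R/I\to \bar\varphi_*(R/I)$ should give that the corresponding map on $\Ext^{j}_R(-,R)$, namely $\Ext^j_R(R/I,R)\to\Ext^j_R(R/\varphi(I)R,R)$ composed with the $\Phi$-identification, is injective. This is the argument of Singh–Walther and Eghbali–Boix used for the injectivity $\Ext^j_R(R/I,R)\to H^j_I(R)$ under pure $\bar\varphi$.

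Concretely, I would follow \cite{EghbaliBoix2023}. The composite $\Ext^j_R(R/I,R)\to\Ext^j_R(R/\varphi(I)R,R)\cong \Phi(\Ext^j_R(R/I,R))$ is the natural map $M\to \varphi_*R\otimes_R M$. One then needs to show that this is injective, using purity of $\bar\varphi$, after identifying $\Ext^j_R(R/I,R)$ as a module over $R/I$ and the map as $M\to \bar\varphi_*(R/I)\otimes_{R/I}M$. This identification is the main obstacle. The map $M\to \varphi_*R\otimes_R M$ is not obviously the same as base change along $\bar\varphi$, and a careful check of the Singh–Walther natural isomorphism (\ref{isoext}) is needed so that purity applies. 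Once this is done, injectivity of the Ext maps for all $j$ dualizes to the desired surjections for all $i$, completing the proof.
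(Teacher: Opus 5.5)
Your reduction is the same as the paper's. By \cite[Lemma 3.5.3]{We} it suffices that each $H^i_{\fm}(R/\varphi^{t+1}(I)R)\to H^i_{\fm}(R/\varphi^{t}(I)R)$ is onto. Dually, each $\Ext^{j}_R(R/\varphi^t(I)R,R)\to\Ext^{j}_R(R/\varphi^{t+1}(I)R,R)$ must be injective, and exactness of $\Phi$ with (\ref{isoext}) reduces this to $t=0$.

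The gap is that the case $t=0$ carries the whole theorem, you do not prove it, and the mechanism you sketch fails at its first step. The map $\pi^*\colon E=\Ext^j_R(R/I,R)\to\Ext^j_R(R/\varphi(I)R,R)\cong\Phi(E)$ induced by the projection is $R$-linear. The unit $m\mapsto 1\otimes m$ of $M\to\varphi_*R\otimes_R M$ is only $\varphi$-semilinear, so these are different maps. For example, take $R=k[x]$, $I=(x)$, $\varphi=F$. Then $E=R/(x)$ and $\Phi(E)=R/(x^p)$, and $\pi^*(\bar 1)=\overline{x^{p-1}}$ while the unit sends $\bar 1\mapsto\bar 1$.

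What purity of $\bar\varphi$ gives by base change is injectivity of the semilinear map $E\to E\otimes_{R/I,\bar\varphi}R/I\cong\Phi(E)/I\Phi(E)$. On the other side, it gives injectivity of the $\bar\varphi$-action on $H^i_{\fm}(R/I)$, which dualizes to a surjectivity statement for a trace-type map on Ext, not to injectivity of $\pi^*$. Neither controls $\ker\pi^*$. In fact $H^i_{\fm}(R/\varphi(I)R)\cong\Phi(H^i_{\fm}(R/I))$, and $\pi$ induces $r\otimes\eta\mapsto r\,\bar\varphi(\eta)$. So injectivity of $\pi^*$ for all $j$ is equivalent to $H^i_{\fm}(R/I)$ being generated as an $R/I$-module by $\bar\varphi(H^i_{\fm}(R/I))$ for all $i$. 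That is an F-fullness type property, which needs a genuine argument beyond purity plus base change. Your "careful check" of (\ref{isoext}) therefore cannot succeed as planned.

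The missing ingredient is exactly \cite[Theorem 2.8]{SinghWalther2007}. The paper quotes it to get injectivity of all the maps $\Ext^i_R(R/\varphi^t(I)R,R)\to\Ext^i_R(R/\varphi^{t+1}(I)R,R)$, then dualizes and applies \cite[Lemma 3.5.3]{We}, as you do.

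You could also have closed the gap with the result you mention yourself. Suppose $\Ext^j_R(R/I,R)\to H^j_I(R)$ is injective (Singh--Walther). Since $\varphi(I)R\subseteq I$ and $\sqrt{\varphi(I)R}=\sqrt{I}$ by cofinality, this map factors through $\Ext^j_R(R/\varphi(I)R,R)$. Hence $\pi^*$ is injective, and applying $\Phi^t$ propagates this to every $t$. Replacing your last two paragraphs with this citation gives the paper's proof.
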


\begin{proof}
 
Fix an index $i$. From \cite[Theorem 2.8]{SinghWalther2007}, the natural map $$\Ext^i_R(R/\varphi^t(I)R,R) \rightarrow \Ext^i_R(R/\varphi^{t+1}(I)R,R)$$
is injective for all $t>0$.
By applying the Matlis duality functor to the monomorphism above, we obtain an epimorphism
$$\Ext_R^{i} (R/\varphi^{t+1}(I)R,R)^{\vee} \rightarrow \Ext_R^{i} (R/\varphi^{t}(I)R,R)^{\vee},$$
which therefore admits the shape
$$H^{\dim R-i}_{\fm}(R/\varphi^{t+1}(I)R) \rightarrow H^{\dim R-i}_{\fm}(R/\varphi^{t}(I)R).$$
Now we are done by \cite[Lemma 3.5.3]{We}.
\end{proof}
As a byproduct, we are able to show that Stanley-Reisner rings are cML.

\begin{corollary} \label{SR}\label{polynomials} Stanley--Reisner rings $($i.e., quotient of polynomial or power series rings over a field by a square-free monomial ideal$)$ are cML.
\end{corollary}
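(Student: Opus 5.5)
The plan is to reduce Corollary \ref{SR} to Theorem \ref{cci-pure}, applied to the cofinal flat triple of Example \ref{monomial example}.

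Let $R=k[x_1,\ldots,x_n]$ and let $I$ be a square-free monomial ideal. Fix $d\geq 2$ and let $\varphi$ be the $k$-linear endomorphism with $x_i\mapsto x_i^d$. By Example \ref{monomial example}, $(R,I,\varphi)$ is a cofinal flat triple. If the cofinality needs justification, it runs as follows. The ideal $\varphi^t(I)R$ is generated by the $d^t$-th powers of the square-free generators of $I$. Hence $\varphi^t(I)R\subseteq I^{d^t}$. Conversely, $I^{\mu(I)(d^t-1)+1}\subseteq \varphi^t(I)R$ by the pigeonhole argument of Example \ref{char p example}.

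By Remark \ref{pure2}, $\varphi(I)\subset I$, because the image of a square-free monomial is a power of that monomial. The same remark, via \cite[Example 2.2]{SinghWalther2007}, says the induced map $\bar\varphi:R/I\to R/I$ is pure. Indeed, $R/I$ has a $k$-basis of monomials whose support is not in $I$, and $\bar\varphi(R/I)$ is a direct summand: the complementary summand is spanned by the basis monomials having some exponent not divisible by $d$. Theorem \ref{cci-pure} then gives that $R/I$ is $\varphi$-cML. This settles the graded polynomial case.

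For the power series case, let $A=k[[x_1,\ldots,x_n]]$, which is the $\fm$-adic completion of the localization $R_\fm$. Localize the triple at $\fm=(x_1,\ldots,x_n)$: $\varphi$ extends to a flat local endomorphism of $R_\fm$, and both the cofinality and the purity are preserved. So the local ring $R_\fm/IR_\fm$ is cML. Its completion is $A/IA$, which is $\widehat\varphi$-cML by Proposition \ref{completion}. Every square-free monomial ideal of $A$ is extended from $R$, so this covers all Stanley--Reisner rings of power series type.

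The main obstacle is the bookkeeping between the graded and local settings: checking that localizing at $\fm$ keeps flatness and purity, and noting that the graded local cohomology statements agree with the local ones. I expect these to be routine because localization is flat and purity is preserved under base change. The combinatorial purity itself is supplied by Remark \ref{pure2}, so it is not an obstacle.
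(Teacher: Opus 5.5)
Your proposal is correct and follows the paper's proof. The paper combines Remark \ref{pure2} and Theorem \ref{cci-pure} for the polynomial case and then uses Proposition \ref{completion} for power series, and you do the same. You are in fact more careful than the paper, which applies Proposition \ref{completion} directly to the non-local ring $k[x_1,\ldots,x_n]$; the one point in your localization step that needs a word is why purity survives. It does because $\varphi$ is module-finite and $\fm$ is the only prime lying over $\fm$, so base-changing $\bar\varphi$ along $R\to R_\fm$ already gives $R_\fm/IR_\fm$.
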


\begin{proof}
By Proposition (\ref{completion}), it suffices to prove the result in case  $R=k[x_1,\ldots, x_n]$ is a polynomial ring over a field $k$ with maximal ideal $\fm=(x_1,\ldots, x_n)$ and $I$ is a square-free monomial ideal. Now, the result is an immediate consequence of Remark (\ref{pure2}) together with Theorem (\ref{cci-pure}).
\end{proof}

Our next goal is to exhibit another class of rings which are cML. Motivated by a question raised by Eisenbud, Musta\textcommabelow{t}\u{a} and Stillman (\cite[Question 6.2]{EMM}), H. Dao, A. D. Stefani and L. Ma (\cite{DDM21}) introduced the following interesting concept.

\begin{definition} \label{cfr}
Given an integer $i$,
a local ring $(R, \fm)$ is said to be $i$-\textit{cohomologically full} if for every surjective map $(T, \mathfrak{n}) \to (R, \fm)$ such that $T$ and $R$ have the same characteristic and $R/ \sqrt{(0)} = T/ \sqrt{(0)}$, the induced map $$H^i_{\fm} (T) \to H^i_{\mathfrak{n}} (R)$$ is an epimorphism.
If $R$ is $i$-cohomologically full for all $i$, then $R$ is called \textit{cohomologically full}.
\end{definition}

\begin{theorem}\label{cful}
Let $(R,I, \varphi)$ be a cofinal flat triple. If $R/I$ is cohomologically full, then $R/I$ is $\varphi$-cML.    
\end{theorem}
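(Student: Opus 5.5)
The plan is to reduce to the surjectivity of the transition maps, using the criterion already observed after the definition: it suffices that, for each $i$ and all $t$, the natural map
$$H^i_{\fm}(R/\varphi^{t+1}(I)R)\to H^i_{\fm}(R/\varphi^{t}(I)R)$$
is an epimorphism. Then \cite[Lemma 3.5.3]{We} gives surjectivity of $\vpl_t H^i_{\fm}(R/\varphi^t(I)R)\to H^i_{\fm}(R/\varphi^s(I)R)$ for every $s$. By Proposition \ref{completion} we may pass to the completion whenever convenient, since cohomological fullness concerns the local ring $R/I$ and is compatible with completion (or simply work directly, as only the local cohomology maps matter).

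The key observation is that each $R/\varphi^{t}(I)R$ is a thickening of $R/I$ with the same reduced ring. Indeed, by cofinality of $\{\varphi^t(I)R\}$ with $\{I^t\}$ we have $\sqrt{\varphi^t(I)R}=\sqrt{I}$ for every $t$. Consider first the surjection $T=R/\varphi^{t}(I)R\to R/I$. Here $\varphi^t(I)R\subseteq I$ is needed; if $\varphi(I)\not\subseteq I$, replace $I$ by $\varphi^{t_0}(I)R\subseteq I$, or use $T=R/(I\cap \varphi^t(I)R)$ instead.

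The cleaner route is to apply cohomological fullness not to $R/I$ itself but to the rings $R/\varphi^{t}(I)R$. For this, one step uses the flat functor $\Phi$: cohomological fullness should transfer from $R/I$ to $\Phi^t(R/I)=R/\varphi^t(I)R$. One would argue that, given $R/\varphi^{t+1}(I)R\to R/\varphi^t(I)R$, surjectivity on $H^i_{\fm}$ is obtained by applying the exact functor $\Phi^t$ (which commutes with $H^i_\fm$, as in the proof of the earlier proposition, because $\varphi(\fm)R$ is $\fm$-primary) to the map $H^i_\fm(R/\varphi(I)R)\to H^i_\fm(R/I)$. So everything reduces to the case $t=0$.

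For $t=0$, the surjection $T=R/\varphi(I)R\to R/I$ requires $\varphi(I)R\subseteq I$. I would arrange this via the cofinal system: choose $J_t=I^{t}$-type comparisons. Concretely, use $T=R/(\varphi(I)R\cap I)$, which surjects onto $R/I$ with the same radical and same characteristic. Cohomological fullness then gives $H^i_\fm(R/(\varphi(I)R\cap I))\twoheadrightarrow H^i_\fm(R/I)$. The map $R/(\varphi(I)R\cap I)\to R/I$ factors through $R/\varphi(I)R$ only if $\varphi(I)R\subseteq I$. Handling this compatibility is the main obstacle. I would first try to show directly that $\varphi(I)R\subseteq I$ holds up to radical considerations: for Frobenius, $I^{[p]}\subseteq I$ always holds, and for a general cofinal flat triple with a local $\varphi$ I expect $\varphi(I)R\subseteq I$ to follow from the decreasing chain hypothesis $\varphi(I)R\subseteq \varphi^0(I)R=I$. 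This is exactly what "decreasing chain" in Definition \ref{cft} gives, so the factorization holds and $T=R/\varphi(I)R$ works directly. Applying $\Phi^t$ then yields all transition epimorphisms, finishing the proof.
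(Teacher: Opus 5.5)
Your argument is correct, and it takes a genuinely different route from the paper's. Once the detours are removed, your proof runs as follows; note that the ideal $I\cap\varphi^t(I)R$ and the unproved claim that cohomological fullness passes to $R/\varphi^t(I)R$ are never used, so they should be deleted.
\begin{enumerate}
\item[(a)] The decreasing-chain clause of Definition \ref{cft} gives $\varphi(I)R\subseteq I$.
\item[(b)] Cofinality gives $\sqrt{\varphi(I)R}=\sqrt{I}$.
\item[(c)] The characteristics agree, since $m\cdot 1\in I$ forces $m\cdot 1=\varphi(m\cdot 1)\in\varphi(I)R$. You asserted this without justification; this one line is what is needed.
\item[(d)] By the definition of cohomological fullness, applied to the single surjection $R/\varphi(I)R\to R/I$, you get $H^i_{\fm}(R/\varphi(I)R)\twoheadrightarrow H^i_{\fm}(R/I)$.
\item[(e)] Applying $\Phi^t$ makes every transition map of the tower $\{H^i_{\fm}(R/\varphi^t(I)R)\}_t$ surjective. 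Here $\Phi^t$ commutes with $H^i_{\fm}$ because $\varphi(\fm)R$ is $\fm$-primary, as in the first proposition of Section \ref{sec3}.
\item[(f)] \cite[Lemma 3.5.3]{We} then finishes the proof.
\end{enumerate}

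The paper instead works by duality. It imports \cite[Proposition 2.1]{DDM21} to get the monomorphisms $\Ext^i_R(R/I,R)\hookrightarrow H^i_I(R)$. It transports these along $\Phi^s$ via (\ref{isoext}) and (\ref{Phicohomology}), and then Matlis-dualizes through the formal local cohomology isomorphism (\ref{iso3}).

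Your version is more elementary and more general. It uses neither local duality nor the Ext-criterion, and hence not the regularity of $R$; it needs only that $\varphi$ is flat and local. It also yields directly the stronger fact that the tower itself has surjective transition maps.

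What the paper's route buys is mainly coherence with its duality framework. It presents the cML property as the dual of the injectivity of $\Ext^i_R(R/\varphi^s(I)R,R)\to H^i_I(R)$, i.e.\ the form (\ref{main}) later exploited in Theorems \ref{Ass} and \ref{main-for-app}.
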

\begin{proof}
By \cite[Proposition 2.1]{DDM21} and since $R/I$ is cohomologically full, the natural map
\begin{equation}\label{extmap}
    \Ext^i_R (R/I,R) \to H^i_I (R)
\end{equation}
is a monomorphism for all $i$.
Let $\Phi$ be the functor as defined in (\ref{functorPhi}). We have seen that $\Phi$ is exact since $\varphi$ is a flat morphism.
Hence, by applying $\Phi^s$ to (\ref{extmap}) and using (\ref{isoext}) along with (\ref{Phicohomology}), we get a monomorphism
\begin{equation}\label{extmono}
\Ext^i_R (R/ \varphi^s(I)R, R) \to H^i_I (R).
\end{equation}
As $\{\varphi^t(I)\}_t$ is cofinal with $\{I^t\}_t$, we can use (\ref{iso1}) to derive an isomorphism
\begin{equation}\label{iso3}
H_I^{n-i} (R) \cong \left(\displaystyle\vpl_t H^{i}_{\fm}(R/\varphi^t(I)R)\right)^{\vee}.
\end{equation}
Then, the map (\ref{extmono}) becomes
$$
\Ext^i_R (R/ \varphi^s(I)R, R) \to \left(\displaystyle\vpl_t H^{n-i}_{\fm}(R/\varphi^t(I)R)\right)^{\vee},
$$
where $n = \dim R$.
After dualizing the map above and taking into account the isomorphisms
\begin{align*}
\left(\displaystyle\vpl_t H^{n-i}_{\fm}(R/\varphi^t(I)R)\right)^{\vee \vee} &\cong \left(\displaystyle\vil_t  (H^{n-i}_{\fm}(R/\varphi^t(I)R)^{\vee}\right)^{\vee} \\ &\cong  
\displaystyle\vpl_t (H^{n-i}_{\fm}(R/\varphi^t(I)R)^{\vee \vee} \\ &\cong 
\displaystyle\vpl_t H^{n-i}_{\fm}(R/\varphi^t(I)R),
\end{align*}
we get the epimorphism
$$
\displaystyle\vpl_t H^{n-i}_{\fm}(R/\varphi^t(I)R \to H^{n-i}_{\fm} (R/ \varphi^s(I)R).
$$
Since this holds for all $i$ and all $s$, we are done. 
\end{proof}

All Cohen-Macaulay local rings are  cohomologically full by \cite[Remark 2.5]{DDM21}. So we get the following consequence (see Theorem \ref{cmCMLR} for another proof).

\begin{corollary}\label{CMcor}
   Let $(R,I, \varphi)$ be a cofinal flat triple. If $R/I$ is Cohen-Macaulay, then $R/I$ is $\varphi$-cML. 
\end{corollary}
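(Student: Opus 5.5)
The corollary should follow directly from Theorem \ref{cful} once we know that a Cohen--Macaulay quotient $R/I$ is cohomologically full. That is \cite[Remark 2.5]{DDM21}, but it is worth sketching why it holds, so the argument is self-contained modulo Theorem \ref{cful}. Put $A=R/I$, a Cohen--Macaulay local ring of dimension $d$. Take any surjection $(T,\mathfrak n)\to(A,\fm)$ with the same characteristic and $T/\sqrt{(0)}=A/\sqrt{(0)}$.

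The first step uses the fact that $\dim T=\dim T_{\rm red}=\dim A_{\rm red}=d$. Since $A$ is Cohen--Macaulay, $H^i_{\fm}(A)=0$ for every $i\neq d$, so $i$-cohomological fullness is automatic for those $i$.

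The second step handles $i=d$. Here $H^d$ is the top local cohomology, which is right exact on modules of dimension at most $d$. Let $K=\ker(T\to A)$; it has dimension at most $d$. The long exact sequence gives
$$H^d_{\mathfrak n}(T)\to H^d_{\mathfrak n}(A)\to H^{d+1}_{\mathfrak n}(K)=0,$$
and $H^{d+1}_{\mathfrak n}(K)$ vanishes by Grothendieck vanishing. So the map is surjective, and $A$ is cohomologically full.

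Finally, we apply Theorem \ref{cful} to the cofinal flat triple $(R,I,\varphi)$ and conclude that $R/I$ is $\varphi$-cML.

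The only step needing care is the second one, namely checking that $\dim K\le d$ so that the top-degree surjectivity goes through. This holds because $\dim T=d$. Alternatively, one could argue directly, as the paper suggests in Theorem \ref{cmCMLR}: Peskine--Szpiro-type vanishing gives $H^j_I(R)=0$ for $j\neq \Ht I$. One would then check that the single nonzero map $H^{d}_{\fm}(R/\varphi^{t+1}(I)R)\to H^d_{\fm}(R/\varphi^t(I)R)$ is surjective, which is again right exactness of top local cohomology, using that $\Phi^t$ preserves the Cohen--Macaulay property of $R/I$ by flatness.
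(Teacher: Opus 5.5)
Your proposal is correct and follows the paper's own argument: Cohen--Macaulay rings are cohomologically full by \cite[Remark 2.5]{DDM21}, and Theorem \ref{cful} then gives the $\varphi$-cML property. Your sketch of that remark is sound (vanishing away from degree $d$, and surjectivity in top degree from Grothendieck vanishing since $\dim T=d$), and your alternative route is the paper's second proof inside Theorem \ref{cmCMLR} (Proposition \ref{rmkCM}, the short exact sequences, and \cite[Lemma 3.5.3]{We}), where the appeal to Peskine--Szpiro-type vanishing is not actually needed.
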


Another important subclass of the class of cohomologically full rings is the one formed with the reduced rings that are essentially of finite type over the complex number field $\mathbb{C}$ with Du Bois singularities (see \cite[Remark 2.5]{DDM21}).

\begin{corollary}
   Let $(R,I, \varphi)$ be a cofinal flat triple, with $R/I$ essentially of finite type over $\mathbb{C}$. If $R/\sqrt{I}$ is Du Bois, then $R/ \sqrt{I}$ is $\varphi$-cML. 
\end{corollary}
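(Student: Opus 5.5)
The plan is to reduce the statement to Theorem \ref{cful}, applied to the cofinal flat triple $(R,\sqrt{I},\varphi)$. Two inputs are needed: that this is genuinely a cofinal flat triple, and that $R/\sqrt{I}$ is cohomologically full.

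For the first input, flatness of $\varphi$ is part of the hypothesis, so only cofinality has to be checked. We must show that $\{\varphi^t(\sqrt{I})R\}_t$ is a decreasing chain cofinal with $\{(\sqrt{I})^t\}_t$. Since $R$ is Noetherian, there is some $k$ with $(\sqrt{I})^k \subseteq I$.
\begin{itemize}
\item One inclusion is formal: $\varphi^t(I)R \subseteq \varphi^t(\sqrt{I})R$.
\item For the other, take generators $f_1,\ldots,f_\mu$ of $\sqrt{I}$. A product of $k$ elements of $\varphi^t(\sqrt I)R$ lies in $\varphi^t((\sqrt I)^k)R \subseteq \varphi^t(I)R$. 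Hence $(\varphi^t(\sqrt I)R)^k \subseteq \varphi^t(I)R$.
\item Combining this with the cofinality of $\{\varphi^t(I)R\}$ with $\{I^t\}$, and the fact that $\{I^t\}$ and $\{(\sqrt I)^t\}$ are cofinal, gives the cofinality of $\{\varphi^t(\sqrt I)R\}$ with $\{(\sqrt I)^t\}$.
\item The decreasing property passes the same way. If one prefers, $\varphi^{t+1}(\sqrt I)R\subseteq \varphi^t(\sqrt I)R$ follows from $\varphi(\sqrt I)R\subseteq \sqrt{\varphi(I)R}\subseteq\sqrt I$.
\end{itemize}
Alternatively, I would just note that the cofinal flat triple condition depends only on the topology defined by the ideal. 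In the archetypal Frobenius case this step is immediate, since $F^e(\sqrt I)R=(\sqrt I)^{[p^e]}$.

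For the second input, we use that $R/\sqrt{I}$ is reduced, essentially of finite type over $\mathbb{C}$, and Du Bois by hypothesis. By \cite[Remark 2.5]{DDM21} (which rests on the Du Bois injectivity results of Ma--Schwede--Shimomoto), such a ring is cohomologically full. In the local setting the notion is about the local rings, so if necessary we localize at $\fm$ and pass to completion via Proposition \ref{completion}. Applying Theorem \ref{cful} to $(R,\sqrt I,\varphi)$ then shows that $R/\sqrt I$ is $\varphi$-cML.

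The main obstacle I expect is compatibility of settings. Theorem \ref{cful} is phrased for $R$ regular local (or standard graded) with $\varphi$ local and flat, whereas Du Bois is a property over $\mathbb{C}$, where no Frobenius is available. The cited cohomological fullness must therefore be applied to the local ring $R_\fm/\sqrt I R_\fm$. The condition in Definition \ref{cfr} requiring equal characteristic is harmless here, since everything has characteristic zero. So the proof is essentially a direct citation, and the only real check is the cofinality verification above.
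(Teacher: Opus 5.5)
Your route is the paper's: the corollary is stated there without separate proof, as an immediate consequence of Theorem \ref{cful} together with \cite[Remark 2.5]{DDM21}, which says reduced Du Bois rings essentially of finite type over $\mathbb{C}$ are cohomologically full. The paper also tacitly assumes that $(R,\sqrt I,\varphi)$ is again a cofinal flat triple. You are right that this needs checking, since otherwise ``$\varphi$-cML'' is not even defined for $R/\sqrt I$. However, your check has a gap in the direction $\varphi^s(\sqrt I)R\subseteq(\sqrt I)^m$ for $s\gg 0$. What you actually prove is $(\varphi^s(\sqrt I)R)^k\subseteq\varphi^s(I)R\subseteq(\sqrt I)^N$. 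That bounds the $k$-th power of $J_s=\varphi^s(\sqrt I)R$, and no combination of cofinalities turns it into a bound on $J_s$ itself. The implication $J^k\subseteq\mathfrak a^N\Rightarrow J\subseteq\mathfrak a^{N'}$ with $N'\to\infty$ is not formal: in $\mathbb{C}[x,\epsilon]/(\epsilon^2)$ with $\mathfrak a=(x)$ and $J=(\epsilon)$, one has $J^2\subseteq\mathfrak a^N$ for all $N$, yet $J\not\subseteq\mathfrak a$. Your fallback remark that the condition ``depends only on the topology defined by the ideal'' is exactly the claim at issue, so it does not close the gap.

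The step does hold here, but it uses that $R$ is regular. Put $\mathfrak a=\sqrt I$ with $\mathfrak a^k\subseteq I$. If $y^k\in\mathfrak a^{km}=(\mathfrak a^m)^k$, then $X^k-y^k$ is an equation of integral dependence, so $y\in\overline{\mathfrak a^m}$. Since $R$ is regular, the Brian\c{c}on--Skoda theorem of Lipman--Sathaye gives a constant $c$ with $\overline{\mathfrak a^{m+c}}\subseteq\mathfrak a^m$ for all $m$; in the local case Rees's theorem for the analytically unramified ring $R$ also works. Given $m$, choose $s$ with $\varphi^s(I)R\subseteq I^{k(m+c)}$. Then for every $x\in\mathfrak a$,
\[
\varphi^s(x)^k=\varphi^s(x^k)\in\varphi^s(I)R\subseteq\mathfrak a^{k(m+c)},
\]
hence $\varphi^s(x)\in\overline{\mathfrak a^{m+c}}\subseteq\mathfrak a^m$, i.e.\ $\varphi^s(\mathfrak a)R\subseteq\mathfrak a^m$. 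With this inserted, the rest of your argument goes through: the other inclusion, monotonicity via $\varphi(I)R\subseteq I$, and the appeal to Theorem \ref{cful}. The Frobenius remark is irrelevant in characteristic zero. Passing to the completion is unnecessary, since Theorem \ref{cful} applies directly, and it would take you out of the finite-type setting where the Du Bois input is available.
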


\begin{remark} In the case where $(R, \fm,k)$ is a standard graded $k$-algebra, the definition of $R$ being cohomologically full is translated by the localization $R_{\fm}$ being cohomologically full (see \cite[Definition 2.4]{DDM21}). It turns out that Stanley-Reisner rings are cohomologically full (see \cite[Remark 2.5]{DDM21}).
In particular, Theorem (\ref{cful}) furnishes an alternative way (see also Corollary (\ref{SR})) to prove that Stanley-Reisner rings are cML. \end{remark}

So far we have seen that, for a cofinal flat triple $(R, I, \varphi)$, if the induced map $\overline{\varphi}: R/I \to R/I$ is pure or $R/I$ is cohomologically full, then $R/I$ is $\varphi$-cML. It might be possible that the union of such two classes constitute the entire class of cML rings. Thus, the following question seems to be natural.

\begin{question}
Are there examples of cML rings not coming from the ones given by Theorem (\ref{cci-pure}) and Theorem (\ref{cful})?    
\end{question}

\section{Cohomological dimension, and a generalization of the Peskine-Szpiro theorem}

As before, we let $R$ be either a regular local ring or a standard graded polynomial ring (over a field) with irrelevant ideal $\fm$, and $I$ is an ideal of $R$ (homogeneous if $R$ is graded). Finding out the relation between the cohomological dimension of $I$ in $R$ and the depth of the quotient ring $R/I$ is a long-standing problem in commutative algebra.
In this section, our main goal (see Theorem \ref{cddepth}) is to prove that the equality
$$
\cd(R, I) = \dim R - \depth R/I
$$
holds whenever $R/I$ is cML. Now recall that if $R/I$ is Cohen-Macaulay then $R/I$ is cML (see Corollary \ref{CMcor}). Thus, our result generalizes a celebrated result of Peskine-Szpiro which establishes the above equality when ($R$  contains a field of positive characteristic, and) $R/I$ is Cohen-Macaulay, in which case $\dim R - \depth R/I$ is simply the height of $I$.  Our theorem will also shed light on a characterization of the Cohen-Macaulay property for cML rings in terms of cohomologically complete intersection ideals. In particular, we shall see that if $R/I$ is an one-dimensional cML ring, then $R/I$ is, necessarily, Cohen-Macaulay.

\subsection{A generalized Peskine-Szpiro theorem}
The following lemma (see \cite[Lemma 3.4]{EABR}) will be used in the sequel.

\begin{lemma} \label{EABR} 
Let $(R, \fm)$ be a $($not necessarily regular$)$ local ring and $I \subset R$ an ideal of finite projective dimension. Assume that $\varphi:R \rightarrow R$ is a ring  endomorphism satisfying the going down property. Then, $\depth R/I \leq \depth R/\varphi(I)R$.
\end{lemma}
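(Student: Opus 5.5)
The plan is to compute $\depth$ through the Auslander--Buchsbaum formula, so that the inequality becomes one between projective dimensions, and then compare the minimal free resolutions of $R/I$ and $R/\varphi(I)R$. Since $I$ has finite projective dimension, choose a finite free resolution $F_\bullet \to R/I \to 0$. Base change along $\varphi$ gives the complex $F_\bullet\otimes_R \varphi_\ast R$: the free modules stay free of the same ranks, the differentials are obtained by applying $\varphi$ to the matrix entries, and $H_0 = R/\varphi(I)R$.

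When $\varphi$ is flat the argument is immediate. The base-changed complex is then exact, and it is minimal because $\varphi$ is local, so $\varphi(\fm)\subseteq \fm$. Hence $\pd R/\varphi(I)R=\pd R/I$. Moreover $\depth R$ is the same on both sides, so Auslander--Buchsbaum gives equality of depths.

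The real content is the general going-down case, where flatness is not available. Here I would work with the complex $G_\bullet = F_\bullet\otimes_R\varphi_\ast R$ directly and use the Buchsbaum--Eisenbud acyclicity criterion. We need, for each $j$,
\[
\operatorname{grade} I_{r_j}(\varphi(d_j))R \ge j,
\]
where $r_j$ is the expected rank. The ideal of minors of $\varphi(d_j)$ is $\varphi(I_{r_j}(d_j))R$. Going down for $\varphi$ implies that $\Ht \varphi(J)R \ge \Ht J$. To pass from height to grade one needs some Cohen--Macaulayness of $R$, or at least the behaviour of going down on associated primes. This passage is the step I expect to be the main obstacle.

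A cleaner alternative avoids acyclicity altogether and compares depths through local cohomology, or through $\operatorname{Ext}$, of the modules. By the Auslander--Buchsbaum formula,
\[
\depth R/\varphi(I)R = \depth R - \pd R/\varphi(I)R,
\]
provided the projective dimension is finite. Even when $G_\bullet$ fails to be acyclic, the minimal length of such a complex bounds the relevant invariant. Write $p=\pd R/I$; then $\Ext^p_R(R/I,R)\ne0$ and the last map $d_p$ has entries in $\fm$. The new Ext module $H^p(G^\ast)=\operatorname{coker}(\varphi(d_p)^T)$ is nonzero because $\varphi$ is local. For the inequality we want an upper bound on the projective dimension, namely $\pd R/\varphi(I)R\le p$, which acyclicity of $G_\bullet$ gives directly.

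So the plan is to establish acyclicity of $G_\bullet$ via Buchsbaum--Eisenbud, using going down to lift grade bounds. For grade I would use the characterization by the depth of localizations: the grade condition fails only if some associated prime $P$ of $R$ contains $\varphi(I_{r_j}(d_j))R$. Going down then contracts $P$ to a prime lying over which prime chains descend, and this is the case to be handled carefully. Once that is done, $\depth R/I=\depth R-p\le \depth R/\varphi(I)R$.
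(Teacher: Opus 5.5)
Your flat-case paragraph is correct. Exactness of $F_\bullet\otimes_R\varphi_\ast R$ gives $\pd R/\varphi(I)R\le \pd R/I$, and Auslander--Buchsbaum finishes. Minimality, and hence equality, needs $\varphi$ local, which the lemma does not assume, but the inequality is all you need. The paper does not prove this lemma; it quotes it from \cite{EABR}. It only applies it to the flat $\varphi$ of a cofinal flat triple, in the proofs of Theorem~\ref{cddepth} and Proposition~\ref{rmkCM}, so that paragraph covers every use made of it here.

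For the lemma as stated, there is a genuine gap. Everything beyond flatness rests on the acyclicity of $G_\bullet=F_\bullet\otimes_R\varphi_\ast R$, and you never establish it.

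\begin{itemize}
\item The ``cleaner alternative'' does not avoid this step. A non-acyclic free complex with $H_0=R/\varphi(I)R$ bounds neither the projective dimension nor the depth of $R/\varphi(I)R$. The nonvanishing of $\operatorname{coker}\varphi(d_p)^{T}$ bears on the opposite inequality.
\item The proposed fix is wrong. $\operatorname{grade}J\ge j$ fails iff $J\subseteq P$ for some prime $P$ with $\depth R_P<j$, and only for $j=1$ are these the associated primes of $R$.
\end{itemize}

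Since $\operatorname{grade}J=\min_{P\supseteq J}\depth R_P$, what Buchsbaum--Eisenbud really requires is $\depth R_{\varphi^{-1}(P)}\le \depth R_P$ for the primes $P\supseteq \varphi(J)R$. Flatness gives this, via the depth formula for the flat local map $R_{\varphi^{-1}(P)}\to R_P$. Cohen--Macaulayness of $R$ also gives it: depth equals height, and going down gives $\Ht P\ge \Ht \varphi^{-1}(P)$. So your Buchsbaum--Eisenbud argument does prove the lemma when $R$ is Cohen--Macaulay.

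Going down, however, controls only heights of localizations, not their depths, and by itself it forces no Tor-vanishing. For example, take $k[\![s,t]\!]\to k[\![x,y,z,w]\!]/(xz,xw,yz,yw)$ with $s\mapsto x+z$ and $t\mapsto y+w$. This map is finite and injective and satisfies going down, since the base is normal and each minimal prime of the target contracts to $0$. Yet the Koszul complex on $s,t$ does not stay acyclic, because the target has depth $1$. This is not an endomorphism, but it shows that going down alone is not the right input.

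So for non-Cohen--Macaulay $R$ and non-flat $\varphi$ you have no argument. Note also that acyclicity would give the equality $\depth R/I=\depth R/\varphi(I)R$ for local $\varphi$, which is more than the lemma claims. That makes it doubtful your route can reach the stated generality at all.
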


We state and prove our generalized Peskine-Szpiro theorem:

\begin{theorem} \label{cddepth}
Let $(R,I, \varphi)$ be a cofinal flat triple and suppose $R/I$ is $\varphi$-cML. If $\dim R = n$, then
$$n-\cd(R,I)=n-\cd(R,\varphi^t(I)R ) = \depth R/\varphi^t(I)R \quad \mbox{for\, all} \quad t,$$
and in particular,
$$n-\cd(R,I) = \depth R/I.$$
\end{theorem}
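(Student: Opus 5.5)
Write $I_t=\varphi^t(I)R$. The plan is to compare the two sides through the duality isomorphism (\ref{iso3}) and then use the cML surjections to control both vanishing and non-vanishing.

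First, the equality $\cd(R,I)=\cd(R,I_t)$ is immediate. Cofinality of $\{I_t\}$ with $\{I^t\}$ gives $\sqrt{I_t}=\sqrt{I}$, so $H^j_{I_t}(R)=H^j_I(R)$ for every $j$. (Alternatively one can use (\ref{Phicohomology}).) It therefore suffices to show $n-\cd(R,I)=\depth R/I_s$ for each fixed $s$; the case $s=0$ is the last assertion.

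Second, apply (\ref{iso3}) with $i$ replaced by $n-j$:
$$H^j_I(R)\cong\Bigl(\vpl_t H^{n-j}_{\fm}(R/I_t)\Bigr)^{\vee}.$$
Since Matlis duality is faithful, $H^j_I(R)=0$ if and only if $\vpl_t H^{n-j}_{\fm}(R/I_t)=0$. Equivalently, by Remark \ref{rmkisoauxiliar}, $(H^j_I(R))^{\vee}\cong\vpl_t H^{n-j}_\fm(R/I_t)$. Write $d_t=\depth R/I_t$.

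We prove the inequality $\cd\ge n-d_s$ as follows. Since $H^{d_s}_\fm(R/I_s)\neq0$ and the map $\vpl_t H^{d_s}_\fm(R/I_t)\to H^{d_s}_\fm(R/I_s)$ is surjective by the cML hypothesis, the inverse limit is nonzero. Hence $H^{n-d_s}_I(R)\neq0$, which gives $\cd(R,I)\ge n-d_s$.

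For the reverse inequality $\cd\le n-d_s$, take $j>n-d_s$, so that $n-j<d_s$. We must show $\vpl_t H^{n-j}_\fm(R/I_t)=0$. This holds if $H^{n-j}_\fm(R/I_t)=0$ for all $t\ge s$, because the inverse limit can be computed on the cofinal tail $t\ge s$. So we need $d_t\ge d_s$ for $t\ge s$.

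Here we use Lemma \ref{EABR} applied to the ideal $I_t$. Its projective dimension is finite because $R$ is regular (in the graded case, a polynomial ring). The endomorphism $\varphi$ is flat, hence satisfies going down. The lemma gives $d_t\le d_{t+1}$, so the sequence $(d_t)$ is non-decreasing. We apply the lemma to $I_t$ rather than $I$, noting that $\varphi(I_t)R=I_{t+1}$. With this monotonicity, both inequalities hold for every $s$, and we get $n-\cd(R,I)=d_s$ for all $s$. In particular $d_t$ is constant.

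The main obstacle is the upper bound on $\cd$. It requires $H^{n-j}_\fm(R/I_t)$ to vanish along the whole tail, and this depends on the depth monotonicity from Lemma \ref{EABR} and on checking its hypotheses (finite projective dimension and going down for flat maps). In the graded setting, the same argument should go through after localizing at $\fm$ or by working with graded Matlis duality. Note also that the cML surjectivity is used only for the lower bound.
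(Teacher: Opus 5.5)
Your proof is correct and uses the same ingredients as the paper's proof: the isomorphism (\ref{iso3}), the cML surjection to show that the inverse limit in degree $\depth R/\varphi^s(I)R$ is nonzero, and the depth monotonicity from Lemma \ref{EABR} to make the limit vanish on a tail. The only difference is in organization: you prove $n-\cd(R,I)=\depth R/\varphi^s(I)R$ directly for each $s$ using the tail $t\ge s$, whereas the paper first handles $s=0$ and then rules out $\depth R/I<\depth R/\varphi^{t_0}(I)R$ by a contradiction that relies on the same tail-vanishing argument.
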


\begin{proof}
First, in view of (\ref{Phicohomology}) we have $$\cd (R,I)= \cd(R, \varphi^t(I)R) \quad \mbox{for\, all} \quad t.$$
On the other hand, by (\ref{completionprop}), we may suppose that $R$ is complete.
Let us start proving the inequality
$$
n- \cd(R, \varphi^t(I)R) \leq \depth R/ \varphi^t(I)R \quad \mbox{for\, all} \quad t.
$$
Recall the isomorphism (\ref{iso3})
$$
H_I^{n-i} (R) \cong \left(\displaystyle\vpl_t H^{i}_{\fm}(R/\varphi^t(I)R)\right)^{\vee}.
$$
Now suppose that $i < n - \cd(R,I)$. In particular, the modules above vanish.
Since the Matlis duality functor is faithful, we obtain
$$
\displaystyle\vpl_t H^{i}_{\fm}(R/\varphi^t(I)R) = 0.
$$
As $R/I$ is $\varphi$-cML, the epimorphism (\ref{CMLR}) implies the vanishing of $ H^{i}_{\fm}(R/\varphi^t(I)R)$ for all $t$.
 Hence, $$n-\cd(R,I )=n-\cd(R,\varphi^t(I)R ) \leq \depth R/\varphi^t(I)R \quad \mbox{for\, all} \quad t.$$ 
Now, we proceed to prove that 
$n - \cd(R,I) \geq \depth R/I.$
Since every flat homomorphism has the going down property, we may use Lemma (\ref{EABR}) to deduce 
$$\depth(R/I) \leq \depth(R/\varphi(I)R) \leq \depth(R/\varphi^2(I)R) \leq \cdots.$$
As a consequence, $H^i_{\fm}(R/\varphi^t(I)R) = 0$ for all $i < \depth R/I$ and all $t$. From (\ref{iso3}), we deduce that $H^{n-i}_I(R) = 0$, as desired.

We have shown $n - \cd(R,I) = \depth R/I \leq \depth R/\varphi^t(I)R$ for all $t$.
It remains to prove that $ \depth R/I = \depth R/\varphi^t(I)R$ for all $t$.

Suppose $\depth R/I < \depth R/ \varphi^{t_0}(I)R$ for some $t_0$.
Then, setting $i=\depth R/I$ in (\ref{iso3}), we obtain
$$
H^{\cd(R,I)}_I (R) \cong \left(\displaystyle\vpl_t H^{\depth R/I}_{\fm}(R/\varphi^t(I)R)\right)^{\vee}
$$
On the other hand, $$H^{\depth R/I}_{\fm}(R/\varphi^t(I)R) =0\quad \mbox{for\, all} \quad t \geq t_0.$$ Therefore, the inverse limit above vanishes as well and we get a contradiction.
\end{proof}
As a consequence, we derive an upper bound for the projective dimension of $R/I$ which, by virtue of Theorem \ref{cful}, generalizes \cite[Corollary 2.8]{DDM21}.

\begin{corollary} \label{depth}  Let $(R,I, \varphi)$ be a flat cofinal triple. Suppose that $R/I$ is  $\varphi$-cML. Then $$\pd R/I \leq \displaystyle\min_{t \geq 0} \mu(\varphi^t(I)R). $$ In particular, $\pd R/I \leq \mu(I)$.
\end{corollary}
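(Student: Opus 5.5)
We combine Theorem \ref{cddepth} with the Auslander--Buchsbaum formula and the elementary bound of the cohomological dimension by the number of generators. Since $R$ is regular (local, or graded polynomial, where we may localize at $\fm$), $R/I$ has finite projective dimension. Auslander--Buchsbaum then gives
$$\pd R/I = \depth R - \depth R/I = n - \depth R/I .$$
Theorem \ref{cddepth} identifies this with $\cd(R,I)$, and more precisely, for every $t$,
$$\pd R/I = n - \depth R/I = n-\cd(R,\varphi^t(I)R) \ \Longrightarrow\ \pd R/I = \cd(R,\varphi^t(I)R).$$

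Next we use the standard fact that $H^i_J(R)=0$ for $i>\mu(J)$. This follows from the \v{C}ech complex on a minimal generating set of $J$, which has length $\mu(J)$. Applying it with $J=\varphi^t(I)R$ gives $\cd(R,\varphi^t(I)R)\le \mu(\varphi^t(I)R)$ for each $t\ge 0$. Taking the minimum over $t$ yields
$$\pd R/I \le \min_{t\ge 0}\mu(\varphi^t(I)R).$$
The case $t=0$, with $\varphi^0(I)R=I$, gives $\pd R/I\le \mu(I)$.

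There is no genuine obstacle once Theorem \ref{cddepth} is available. The only point needing care is the graded case. There one either localizes at $\fm$, noting that depth, projective dimension, local cohomology and the cML property are compatible with this localization, or uses the graded Auslander--Buchsbaum formula directly. Independence of $\cd$ from $t$ is already part of Theorem \ref{cddepth}, via (\ref{Phicohomology}).
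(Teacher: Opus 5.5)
Your proposal is correct and matches the paper's proof: Auslander--Buchsbaum gives $\pd R/I = \dim R - \depth R/I$, Theorem~\ref{cddepth} identifies this with $\cd(R,\varphi^t(I)R)$ for every $t$, and the bound $\cd(R,J)\le\mu(J)$ from the \v{C}ech complex finishes the argument. Your treatment of the graded case and your justification of $\cd\le\mu$ only make explicit what the paper leaves implicit.
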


\begin{proof}
 Using the Auslander-Buchsbaum equality and Theorem (\ref{cddepth}), we have
$$ \pd R/I = \dim R - \depth R/I= \cd(R,\varphi^t(I)R ) \leq \mu({\varphi^t(I)R})\quad \mbox{for\, all} \quad t.$$ Now the claim is proved.
\end{proof}

\begin{remark}
Notice that the same argument as above shows that $\pd R/I = \pd/ \varphi^t(I)R$ for all $t$.    
\end{remark} 
Now, we turn our attention to the following question:

\begin{question}\label{Q1}\rm When is a cML ring a Cohen-Macaulay ring?
    
\end{question}
The next result offers a first glance in this direction.
\begin{proposition} \label{S1}
Let $(R,I, \varphi)$ be a cofinal flat triple. Suppose that $R/I$ is  $\varphi$-cML.
\begin{enumerate}
\item[\rm(i)] If $R/I$ is Cohen-Macaulay, then $I$ is cohomologically complete intersection. 
\item [\rm(ii)] If $\dim R/I>0$, then $\depth R/I>0$. 
\item [\rm(iii)] If $\dim R/I=1$, then $R/I$ is Cohen-Macaulay and $\cd(R,I)= \dim R-1$.
\end{enumerate}
\end{proposition}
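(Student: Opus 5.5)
All three parts will be derived from Theorem \ref{cddepth}, which for a $\varphi$-cML quotient gives $\cd(R,I)=n-\depth R/I$ with $n=\dim R$. Since $R$ is regular, hence catenary and equidimensional, we also have $\Ht I=n-\dim R/I$.

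For (i), assume $R/I$ is Cohen-Macaulay, so $\depth R/I=\dim R/I$. Theorem \ref{cddepth} then gives
$$\cd(R,I)=n-\depth R/I=n-\dim R/I=\Ht I.$$
The standard fact $H^{\Ht I}_I(R)\neq 0$ shows this value is attained. Hence $I$ is a cohomologically complete intersection.

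For (ii), assume $\dim R/I>0$ and, for a contradiction, $\depth R/I=0$. Theorem \ref{cddepth} gives $\cd(R,I)=n$, so $H^n_I(R)\neq 0$. The step to justify is the vanishing of $H^n_I(R)$ when $I$ is not $\fm$-primary. By Proposition \ref{completion}, and since local cohomology commutes with the flat base change $R\to\widehat R$, we may assume $R$ is complete. Then $R$ is a complete regular local ring, hence a complete local domain. Moreover $I$ is not $\fm$-primary, because $\dim R/I>0$. The Hartshorne--Lichtenbaum vanishing theorem \cite[Theorem 8.2.1]{BroSha} now gives $H^n_I(R)=0$, a contradiction. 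In the graded case we first localize at $\fm$ and then argue the same way. I expect this reduction to the complete case to be the only point that needs care, namely checking that $\cd$ and $\dim$ are preserved. Both are preserved, since $\widehat R$ is faithfully flat over $R$.

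For (iii), assume $\dim R/I=1$. Part (ii) gives $\depth R/I\geq 1$, and always $\depth R/I\le\dim R/I=1$, so $\depth R/I=1=\dim R/I$ and $R/I$ is Cohen-Macaulay. Theorem \ref{cddepth} then yields $\cd(R,I)=n-1$.
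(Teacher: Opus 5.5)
Your proof is correct and follows the paper's argument: (i) and (iii) come directly from Theorem~\ref{cddepth}, and (ii) argues by contradiction, using $\cd(R,I)=\dim R$ together with Hartshorne--Lichtenbaum vanishing over the complete domain $\widehat{R}$. The only difference is order: you apply Theorem~\ref{cddepth} in $R$ and carry the nonvanishing of $H^n_I(R)$ to $\widehat{R}$ by faithful flatness, whereas the paper first passes to $\widehat{R}/I\widehat{R}$ (which is cML by Proposition~\ref{completion}) and applies the theorem there.
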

\begin{proof}
\begin{enumerate}
\item[\rm(i)] Since $\dim R/I = \depth R/I$ and $\dim R - \dim R/I = \Ht I$, the claim is an immediate consequence of Theorem (\ref{cddepth}). 
    \item[\rm (ii)] By Proposition (\ref{completionprop}), $\widehat{R}/ I \widehat{R}$ is   $\widehat{\varphi}$-cML.
    If $\depth R/I = 0$, then $\depth \widehat{R}/I \widehat{R} =0$ as well.
    Therefore, it follows from Theorem (\ref{cddepth}) that
    $n = \cd(\widehat{R}, I \widehat{R})$, where $n = \dim R = \dim \widehat{R}$.
    Since $\widehat{R}$ is a complete domain, we conclude that $I \widehat{R}$ is $\fm \widehat
    R$-primary. In particular, $\dim \widehat{R}/I \widehat{R}=0$, which is a contradiction.
    Hence, $\depth R/I >0$.
    \item[\rm (iii)] It follows directly from item (ii) together with Theorem (\ref{cddepth}). 
\end{enumerate}\end{proof}

\begin{remark}\label{myremark2}
If $(R,\fm)$ is a complete local domain with $\dim R=d$ and $I$ is an ideal of $R$, then as a consequence of the Hartshorne-Litchenbaum vanishing Theorem, $\dim R/I >0$ if and only if $\cd(R,I) \leq d-1$.
For one-dimensional ideals, Proposition (\ref{S1}) shows that being  cML is a sufficient condition on $R/I$ (even in the non-complete case) to ensure that $\cd(R,I)=d-1$.
\end{remark}
\begin{remark}\label{myremark3}
Proposition (\ref{S1})(ii) does not extend to higher values of $\dim R/I$. Indeed, for each $n>1$, we can consider the ideal $$I=(x_1x_2, \ldots, x_1 x_{n+1})$$ in $R=k[x_1, \ldots,x_{n+1}]$. Since $R/I$ is a Stanley-Reisner ring, it must be cML (see Corollary (\ref{polynomials})). On the other hand,
$\dim R/I =n$, $\cd(R,I)=n$, but $R/I$ is not Cohen-Macaulay.    
\end{remark}
The following result is what we need in order to answer Question (\ref{Q1}) completely.
\begin{proposition}\label{rmkCM}
 Let $(R,I, \varphi)$ be a cofinal flat triple $(R,I, \varphi)$. If $R/I$ is a Cohen-Macaulay, then so is $R/\varphi^t(I)R$ for all $t$. Moreover, $\dim R/I = \dim R/ \varphi^t(I)R$ for every $t$.  
\end{proposition}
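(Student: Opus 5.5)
We argue by induction on $t$. Applying the exact functor $\Phi$ to the quotient of $R/\varphi^{t-1}(I)R$ yields $R/\varphi^{t}(I)R$, so it suffices to treat the passage from $I$ to $\varphi(I)R$. The idea is to transport finite free resolutions and Ext modules along $\Phi$. Concretely, write $J=\varphi(I)R$. Since $\Phi(R)\cong R$ and $\Phi$ is exact, applying $\Phi$ to a short exact sequence $0\to I\to R\to R/I\to 0$ gives
\[
\Phi(R/I)\cong R/\varphi(I)R .
\]
Then (\ref{isoext}) gives
\[
\Ext^i_R(R/J,R)\cong \Phi\bigl(\Ext^i_R(R/I,R)\bigr) \quad \text{for all } i .
\]

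The first step is to show that $\Phi(M)=0$ if and only if $M=0$ for finitely generated $M$. For this I would use that $\varphi$ is a flat local map, so $\varphi_\ast R$ is faithfully flat over $R$ via $\varphi$. Hence $\Ext^i_R(R/J,R)\neq 0$ exactly for the same indices $i$ as $\Ext^i_R(R/I,R)$. Since $R$ is regular (Gorenstein), the standard characterizations give:
\[
\Ht J=\min\{i : \Ext^i_R(R/J,R)\neq0\}, \qquad \pd R/J=\max\{i : \Ext^i_R(R/J,R)\neq 0\}.
\]
The first uses grade equals height in a Cohen--Macaulay ring; the second holds since $\pd R/J<\infty$. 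Consequently $\Ht J=\Ht I$ and $\pd R/J=\pd R/I$. As a side check, the equality $\pd R/J=\pd R/I$ also follows directly, since $\Phi$ carries a minimal free resolution of $R/I$ to a free resolution of $R/J$ whose differentials still have entries in $\varphi(\fm)R\subseteq\fm$, hence is minimal of the same length.

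Now suppose $R/I$ is Cohen--Macaulay. Then, by Auslander--Buchsbaum,
\[
\pd R/I=\dim R-\depth R/I=\dim R-\dim R/I=\Ht I,
\]
using that $R$ is regular, hence catenary and equidimensional, so that $\dim R/I=\dim R-\Ht I$. For $J$ we get $\depth R/J=\dim R-\pd R/J=\dim R-\Ht I$. Also $\dim R/J=\dim R-\Ht J=\dim R-\Ht I$. Thus $\depth R/J=\dim R/J$, so $R/J$ is Cohen--Macaulay with $\dim R/J=\dim R/I$. Induction on $t$ finishes the argument. (In the graded case I would localize at $\fm$, where all quantities are preserved.)

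The main obstacle is the faithful flatness claim. One should confirm that, for a flat local homomorphism $\varphi:R\to R$ (that is, $\varphi(\fm)\subseteq\fm$), $N\otimes_R\varphi_\ast R=0$ forces $N=0$ when $N$ is finitely generated. By Nakayama this reduces to $N/\fm N\otimes_R\varphi_\ast R\neq0$ whenever $N/\fm N\neq0$. That in turn follows from $k\otimes_R\varphi_\ast R=R/\varphi(\fm)R\neq 0$, which holds because $\varphi(\fm)R\subseteq\fm$. Alternatively, the dimension equality alone could be obtained from the flat dimension formula $\dim R=\dim R+\dim R/\varphi(\fm)R$ applied to $R/I\to R/J$. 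The Ext-based route is cleaner, since it handles depth and dimension simultaneously.
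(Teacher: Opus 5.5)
Your proof is correct, but it takes a different route from the paper.

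\textbf{The paper's argument.} It is a short sandwich. Cofinality gives $\sqrt{I}\subseteq\sqrt{\varphi^t(I)R}$, hence $\dim R/\varphi^t(I)R\le\dim R/I$. The cited Lemma~\ref{EABR} (depth cannot drop along an endomorphism with going down, for ideals of finite projective dimension) gives $\depth R/I\le\depth R/\varphi^t(I)R$. The chain
$\depth R/I\le\depth R/\varphi^t(I)R\le\dim R/\varphi^t(I)R\le\dim R/I$
then collapses when $R/I$ is Cohen--Macaulay.

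\textbf{Your argument.} You transport Ext data along $\Phi$, which is faithfully exact on finitely generated modules. You then read off $\Ht$ via Rees' grade characterization and grade $=$ height, read off $\pd$ as the top nonvanishing $\Ext^i(-,R)$, and finish with Auslander--Buchsbaum and $\dim R/J=\dim R-\Ht J$.

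\textbf{What each route buys.} Yours never uses cofinality, only that $\varphi$ is flat and local, and it avoids the external lemma. It also proves the equalities $\pd R/\varphi^t(I)R=\pd R/I$, $\depth R/\varphi^t(I)R=\depth R/I$ and $\dim R/\varphi^t(I)R=\dim R/I$ for every proper ideal $I$, with no Cohen--Macaulay hypothesis. In particular, the last step in the proof of Theorem~\ref{cddepth} (equality of the depths) holds without the cML assumption. The paper's route buys brevity, given Lemma~\ref{EABR}.

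\textbf{A shorter form of your idea.} You half-noted this yourself: the induced map $R/I\to R/\varphi(I)R$ is flat local, and its closed fibre is $R/\varphi(\fm)R$, which is Artinian by the dimension formula for $\varphi$. The depth and dimension formulas for flat local homomorphisms then give $\depth R/\varphi(I)R=\depth R/I$ and $\dim R/\varphi(I)R=\dim R/I$ at once.
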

\begin{proof}
Since $\{\varphi^t(I)R\}_{t\geq 0}$ is cofinal with $\{I^t\}_{t \geq 0}$, the inclusion $\sqrt{I}\subset \sqrt{\varphi^t(I)R}$ holds for every $t$. Therefore, by means of Lemma (\ref{EABR}) we have successive inequalities
$$\depth R/I \leq \depth R/\varphi^t(I)R \leq \dim R/\varphi^t(I)R \leq \dim R/I.$$
Thus, we are done as $R/I$ is Cohen-Macaulay.    
\end{proof}

\begin{theorem}\label{cmCMLR}
Let $(R, I, \varphi)$ be a  cofinal flat triple.
The following statements are equivalent:
\begin{enumerate}
    \item[\rm(i)] $R/I$ is Cohen-Macaulay.
     \item[\rm(ii)] $R/I$ is  $\varphi$-cML and $\Ht(I)=\cd(R,I)$.    
\end{enumerate}
Moreover, if we assume that $\varphi(I) \subset I$, then the statements above are equivalent to the following assertion:
\begin{enumerate}
    \item[\rm(iii)] $R/I$ is  $\varphi$-cML and $R/\varphi^t(I)R$ is Cohen-Macaulay for some $t$. 
\end{enumerate}
\end{theorem}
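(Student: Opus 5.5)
We prove (i)$\Rightarrow$(ii)$\Rightarrow$(i), and then, assuming $\varphi(I)\subset I$, show that (iii) is equivalent to them.

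\emph{(i)$\Rightarrow$(ii).} If $R/I$ is Cohen-Macaulay, Corollary \ref{CMcor} shows that $R/I$ is $\varphi$-cML. Proposition \ref{S1}(i) then says that $I$ is a cohomologically complete intersection, that is, $\cd(R,I)=\Ht(I)$.

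\emph{(ii)$\Rightarrow$(i).} Since $R/I$ is $\varphi$-cML, Theorem \ref{cddepth} gives $\depth R/I=n-\cd(R,I)$. By hypothesis this equals $n-\Ht(I)$. Because $R$ is regular (local or polynomial, so catenary and equidimensional), $n-\Ht(I)=\dim R/I$. Hence $\depth R/I=\dim R/I$, and $R/I$ is Cohen-Macaulay. In the graded case the same argument goes through after localizing at $\fm$, which preserves all the invariants involved.

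\emph{(i)$\Rightarrow$(iii).} Take $t=0$, or any $t$: Proposition \ref{rmkCM} shows that every $R/\varphi^t(I)R$ is Cohen-Macaulay, and $R/I$ is $\varphi$-cML by Corollary \ref{CMcor}.

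\emph{(iii)$\Rightarrow$(i).} This is the main step. Suppose $R/I$ is $\varphi$-cML and $R/\varphi^{t}(I)R$ is Cohen-Macaulay for some $t$. Theorem \ref{cddepth} gives
$$\depth R/I=\depth R/\varphi^t(I)R=\dim R/\varphi^t(I)R,$$
so it suffices to show $\dim R/\varphi^t(I)R=\dim R/I$. Since $\varphi(I)\subset I$, we have $\varphi^t(I)R\subset I$, hence $\dim R/\varphi^t(I)R\geq \dim R/I$. Conversely, cofinality gives $I^s\subset\varphi^t(I)R$ for some $s$, so $\sqrt{I}\subset\sqrt{\varphi^t(I)R}$ and $\dim R/\varphi^t(I)R\le\dim R/I$. (This is the inclusion already used in the proof of Proposition \ref{rmkCM}; here it holds because $\varphi^t(I)R$ lies between $I^s$ and $I$.) Therefore $\depth R/I=\dim R/I$, and $R/I$ is Cohen-Macaulay.

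The only delicate point is this equality of dimensions, which is where the hypothesis $\varphi(I)\subset I$ enters. As a cross-check, Theorem \ref{cddepth} also gives $\cd(R,I)=\cd(R,\varphi^t(I)R)$, and the heights of $I$ and $\varphi^t(I)R$ agree because their radicals coincide. So one can alternatively verify (ii): $\cd(R,I)=\cd(R,\varphi^t(I)R)=\Ht(\varphi^t(I)R)=\Ht(I)$, using the Cohen-Macaulay case (i)$\Rightarrow$(ii) for the triple built from $\varphi^t(I)R$.
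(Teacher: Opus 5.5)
Your proposal is correct and follows the paper's proof essentially step for step. Your (ii)$\Rightarrow$(i) and (iii)$\Rightarrow$(i) are the paper's arguments: Theorem \ref{cddepth} combined with $\Ht I+\dim R/I=\dim R$, and the equality of the radicals of $I$ and $\varphi^t(I)R$, which you justify more explicitly than the paper does. The only difference is cosmetic, in (i)$\Rightarrow$(ii): you cite Corollary \ref{CMcor} and Proposition \ref{S1}(i), whereas the paper gives a self-contained Grothendieck-vanishing argument for the cML property and then applies Theorem \ref{cddepth} directly.
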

\begin{proof}
Let us denote $d= \dim R/I$.

    \rm (i) $\Longrightarrow \rm (ii)$: Suppose that $R/I$ is Cohen-Macaulay. From Corollary \ref{CMcor} we know $R/I$ is $\varphi$-cML, but we proceed to give a simple, self-independent proof of this fact. By Proposition (\ref{rmkCM}), the $R$-module $R/ \varphi^t(I)R$ is Cohen-Macaulay with dimension $d$ for all $t$.
  The short exact sequence
  $$0 \rightarrow \varphi^{t}(I)R/\varphi^{t+1}(I)R \rightarrow
   R/\varphi^{t+1}(I)R \rightarrow R/\varphi^{t}(I)R \rightarrow 0,$$ induces the following exact sequence in cohomology   
   $$H^d_{\fm}(R/\varphi^{t+1}(I)R) \stackrel{\phi^t}{\rightarrow}
   H^d_{\fm}(R/\varphi^t(I)R) \rightarrow H^{d+1}_{\fm} (\varphi^{t}(I)R/\varphi^{t+1}(I)R).$$
   Now, using Grothendieck's Vanishing Theorem we deduce that $\phi^t$ is an epimorphism for all $t$. Then, applying (\cite[Lemma 3.5.3]{We}), we see that $R/I$ is  $\varphi$-cML.
   
   Due to the Cohen-Macaulayness of $R$, $\Ht(I) = \dim R - d$.
On the other hand, since $R/I$ is  $\varphi$-cML, the equality
$$
\cd(R,I) = \dim R - \depth R/I
$$
holds true by Theorem (\ref{cddepth}). Now, the equality $\Ht I = \cd(R,I)$ is immediate since $R/I$ is Cohen-Macaulay.

\rm (ii) $\Longrightarrow \rm (i)$: The argument is similar to the one in the last part of the previous implication.

From what we have done along with Remark (\ref{rmkCM}), it is clear that (i) implies (iii).

For the converse, notice that the additional hypothesis  $\varphi(I)\subset I$ forces the ideals $I$ and $\varphi^t(I)R$ to have the same radical for all $t$.
In particular, $\dim R/ \varphi^t(I)R = d.$
By Theorem (\ref{cddepth}), $\depth R/I = \depth R / \varphi^t(I)R$, and we are done. \end{proof}


\begin{remark} \label{C-Mht=cd}
 It is worth mentioning that, under the assumptions of Theorem (\ref{cmCMLR}), from the Cohen-Macaulayness of $R/I$ it is possible to derive the equality $\Ht(I)=\cd(R,I)$ without making any use of the fact that $R/I$ is  $\varphi$-cML. First of all, as $\Ht(I) \leq \cd(R,I)$, it suffices to prove  $\cd(R,I) \leq \Ht(I)$. Notice that $H^i_{\fm}(R/I)=0$ for all $i < \dim R-\Ht(I)= \depth R/I$. Applying $\Phi^t(-)$  and taking the inverse limit, we derive that 
 $$\displaystyle\vpl_t H^{i}_{\fm}(R/\varphi^t(I)R)= 0 \quad \mbox{for\, all} \quad i < \dim R-\Ht(I).$$  Now, using (\ref{iso3}) we get $H_I^{\dim R-i} (R)=0$ for all $i < \dim R-\Ht(I)$, which implies $\cd(R,I) \leq \Ht(I)$, as desired.
\end{remark}

Now the following characterization is immediate.

\begin{corollary}
    Let $(R,I, \varphi)$ be a cofinal flat triple. Suppose that $R/I$ is $\varphi$-cML.
    Then, $R/I$ is Cohen-Macaulay if and only if $I$ is cohomologically complete intersection.
\end{corollary}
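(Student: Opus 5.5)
The plan is to read this off Theorem \ref{cddepth} together with the definition of a cohomologically complete intersection ideal, i.e.\ an ideal $I$ with $\cd(R,I)=\Ht I$. The key input is that, since $R/I$ is $\varphi$-cML, Theorem \ref{cddepth} gives
$$\cd(R,I)=\dim R-\depth R/I.$$
Since $R$ is regular (local, or a graded polynomial ring localized at $\fm$), it is Cohen-Macaulay and catenary. Hence $\Ht I=\dim R-\dim R/I$; in the graded case all ideals involved are homogeneous, so the usual graded adaptation applies. Subtracting the two formulas yields the identity
$$\cd(R,I)-\Ht I=\dim R/I-\depth R/I.$$

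For the forward direction, assume $R/I$ is Cohen-Macaulay. Then $\depth R/I=\dim R/I$, and the identity gives $\cd(R,I)=\Ht I$, so $I$ is a cohomologically complete intersection. This is exactly Proposition \ref{S1}(i); equivalently, it is (i)$\Rightarrow$(ii) of Theorem \ref{cmCMLR}.

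For the converse, assume $\cd(R,I)=\Ht I$. The identity then forces $\dim R/I=\depth R/I$, so $R/I$ is Cohen-Macaulay. This is precisely (ii)$\Rightarrow$(i) of Theorem \ref{cmCMLR}, since the cML hypothesis is in force.

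There is essentially no obstacle. The only point needing care is the equality $\Ht I=\dim R-\dim R/I$, which holds for every ideal of a regular local ring (and, in the graded setting, for homogeneous ideals of a polynomial ring, after localizing at $\fm$). I would therefore write the proof as a direct citation of Theorem \ref{cmCMLR}, with the computation above recorded as a one-line justification.
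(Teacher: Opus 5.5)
Your proof is correct and follows essentially the paper's route: the paper records this corollary as immediate from Theorem \ref{cmCMLR}, whose equivalence (i)$\Leftrightarrow$(ii) rests on the same two facts you use, namely $\cd(R,I)=\dim R-\depth R/I$ from Theorem \ref{cddepth} and $\Ht I=\dim R-\dim R/I$ in the Cohen--Macaulay ring $R$. Your identity $\cd(R,I)-\Ht I=\dim R/I-\depth R/I$ simply makes both directions explicit at once, and it is slightly more transparent than the paper's terse ``the argument is similar'' for (ii)$\Rightarrow$(i).
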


\subsection{Transference of the cML property}
We now turn to discussing when the property of being cML can be transferred from a quotient ring $R/I$ to another, say $R/J$. So we start raising the following question:

\begin{question} \label{CMLRradical}
   Suppose that $(R,I, \varphi)$ and $(R,J,\varphi)$ are cofinal flat triples, with $\sqrt{I}=\sqrt{J}$. If $R/I$ is cML, does the same hold for $R/J$? 
\end{question}

The interest in finding (at least partial) answers to Question (\ref{CMLRradical}) relies on  algebraic properties that can be interchanged between $R/I$ and $R/J$.
The next proposition illustrates this idea and is another interesting consequence of Theorem (\ref{cddepth}).

\begin{proposition}\label{depth2}
Let $(R, I, \varphi)$ and $(R,J,\varphi)$ be cofinal flat triples, with $\sqrt{I}= \sqrt{J}$. If
$R/I$ and $R/J$ are $\varphi$-cML then $\depth R/I = \depth R/J$. In particular, $R/I$ is Cohen-Macaulay if and only if $R/J$ is Cohen-Macaulay.
\end{proposition}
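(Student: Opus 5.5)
The plan is to reduce everything to Theorem \ref{cddepth} together with the fact that local cohomology supported at an ideal depends only on its radical. First, since $\sqrt{I}=\sqrt{J}$, we have $H^i_I(R)\cong H^i_{\sqrt{I}}(R)=H^i_{\sqrt{J}}(R)\cong H^i_J(R)$ for all $i$. This holds because the Čech complex description, or equivalently $\Gamma_I=\Gamma_{\sqrt I}$, shows the torsion functors coincide. Hence $\cd(R,I)=\cd(R,J)$.

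Next, apply Theorem \ref{cddepth} to each of the two cofinal flat triples $(R,I,\varphi)$ and $(R,J,\varphi)$. Both quotients are $\varphi$-cML by hypothesis, so with $n=\dim R$ we get
$$\depth R/I = n-\cd(R,I) = n-\cd(R,J) = \depth R/J.$$
This gives the first assertion.

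For the Cohen-Macaulay statement, note that $\sqrt{I}=\sqrt{J}$ gives $V(I)=V(J)$, so $\dim R/I=\dim R/J$. Then $R/I$ is Cohen-Macaulay iff $\depth R/I=\dim R/I$ iff $\depth R/J=\dim R/J$ iff $R/J$ is Cohen-Macaulay. In the graded case the same argument works with the homogeneous maximal ideal.

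There is no serious obstacle, since all the substantive work lies in Theorem \ref{cddepth}. The only point needing care is the radical invariance of $\cd$, which is standard: we have $\Gamma_I(M)=\Gamma_{\sqrt I}(M)$ because $I^k\subset\sqrt{I}^{\,m}$ and $\sqrt{I}^{\,m}\subset I$ for suitable $m,k$, using Noetherianity.
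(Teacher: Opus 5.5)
Your proof is correct and follows the paper's route: apply Theorem \ref{cddepth} to both triples and use $\cd(R,I)=\cd(R,J)$, which holds because $\sqrt{I}=\sqrt{J}$. You also justify two steps the paper leaves implicit, namely the radical invariance of $\cd$ and the equality $\dim R/I=\dim R/J$ needed for the Cohen--Macaulay conclusion.
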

\begin{proof}
    From Theorem (\ref{cddepth}), we can write
$$\depth R/I=\dim R- \cd(R,I)=\dim R-\cd(R,J)= \depth R/J,$$ which proves the claim.
\end{proof}

The situation suggested by Question (\ref{CMLRradical}) is not that simple, even when $I= \sqrt{J}$.
We borrow Example (3.12) from \cite{DDM21} to illustrate that this question is no longer affirmative in general.
\begin{example} \label{radicalrelation} There exists an ideal $I$ such that $R/I$ is neither Cohen-Macaulay nor cML, but $R/\sqrt{I}$ is both cML and Cohen-Macaulay. Indeed, take $R=k [\![ x,y,z]\!]$ and $I=(x^4,x^3y,x^2y^2z,xy^3,y^4)$. Since $$x^2y^2 \in I:_R (x,y,z),$$ we have $\depth R/I=0$. Clearly $\dim R/I=1$, so $R/I$ is not cML by Proposition (\ref{S1}) and also is not Cohen-Macaulay. On the other hand, $\sqrt{I}=(x,y)$ is a square-free monomial ideal. It then follows from Corollary (\ref{polynomials}) that $R/\sqrt{I}$ is cML and Cohen-Macaulay.
    \end{example}

In order to find an affirmative answer to Question (\ref{CMLRradical}) we shall consider rings related by means of surjective maps between their local cohomology modules. This idea has been considered in some recent investigations; see, for instance, \cite{MaSchewedeShimomoto17},   \cite{DDM21}, and references therein.


\begin{theorem} \label{surj.lc}
Let $(R, I, \varphi)$ and $(R,J,\varphi)$ be cofinal flat triples, with $\sqrt{I}= \sqrt{J}$. Assume that $I \subset J$. Suppose the natural map $$H^{i}_{\fm}(R/I) \rightarrow H^{i}_{\fm}(R/J)$$ is an epimorphism for all $i$. If $R/I$ is $\varphi$-cML,  then so is $R/J$. 
\end{theorem}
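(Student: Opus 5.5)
Since $I\subset J$, we have $\varphi^t(I)R\subset\varphi^t(J)R$ for every $t$. This gives a map of inverse systems $\{R/\varphi^t(I)R\}_t\to\{R/\varphi^t(J)R\}_t$. For each $i$ and $s$ we obtain a commutative square
\[
\begin{array}{ccc}
\displaystyle\vpl_t H^{i}_{\fm}(R/\varphi^t(I)R) & \longrightarrow & H^{i}_{\fm}(R/\varphi^s(I)R)\\
\downarrow & & \downarrow\\
\displaystyle\vpl_t H^{i}_{\fm}(R/\varphi^t(J)R) & \longrightarrow & H^{i}_{\fm}(R/\varphi^s(J)R).
\end{array}
\]
The top arrow is onto because $R/I$ is $\varphi$-cML. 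The whole point is therefore to show that the right vertical arrow is onto. Once that holds, the composite through the lower-left corner is onto, so the bottom arrow is onto, and this is exactly the $\varphi$-cML property of $R/J$. Note that we never need to know anything about the left vertical map.

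To get surjectivity of the right vertical arrow, apply the exact functor $\Phi^s$ of (\ref{functorPhi}) to the epimorphism $H^{i}_{\fm}(R/I)\to H^{i}_{\fm}(R/J)$. Flatness of $\varphi$ gives $\Phi^s(R/I)\cong R/\varphi^s(I)R$ and similarly for $J$. We also need
\[
\Phi^s(H^i_{\fm}(M))\cong H^i_{\varphi^s(\fm)R}(\Phi^s M)=H^i_{\fm}(\Phi^s M).
\]
This comes from $\Phi$ commuting with direct limits and with $\Ext$ via (\ref{isoext}), together with the fact that $\varphi^s(\fm)R$ is $\fm$-primary (the dimension-formula argument from the first proposition of Section \ref{sec3}). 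This is the same mechanism used there to transport surjectivity of the maps (\ref{1}), and the identifications are natural, so the map obtained is the natural one. Exactness of $\Phi^s$ then preserves surjectivity, giving the epimorphism $H^{i}_{\fm}(R/\varphi^s(I)R)\to H^{i}_{\fm}(R/\varphi^s(J)R)$ for all $s$.

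The main obstacle I expect is the naturality bookkeeping. We must check that the isomorphism $\Phi^s H^i_{\fm}(-)\cong H^i_{\fm}(\Phi^s(-))$ is compatible with the maps induced by $R/I\to R/J$, and that the square above genuinely commutes. Both should follow from functoriality of the Čech complex (or of $\vil\Ext$), but I would write them out carefully. As far as I can see, the hypothesis $\sqrt{I}=\sqrt{J}$ is not needed for this argument beyond ensuring that $(R,J,\varphi)$ is a cofinal flat triple.
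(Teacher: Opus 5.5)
Your proof is correct. Its first half matches the paper's: the paper also applies the exact functor $\Phi^s$ to the given surjection to obtain epimorphisms $H^i_{\fm}(R/\varphi^s(I)R)\to H^i_{\fm}(R/\varphi^s(J)R)$. It then composes these with the cML projections for $R/I$, which gives surjections $\varprojlim_t H^i_{\fm}(R/\varphi^t(I)R)\to H^i_{\fm}(R/\varphi^s(J)R)$.

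The difference is in the last step. The paper uses $\sqrt{I}=\sqrt{J}$ to see that both chains $\{\varphi^t(I)R\}$ and $\{\varphi^t(J)R\}$ are cofinal with $\{I^t\}$. It then invokes the isomorphism (\ref{iso2}), which is obtained through Matlis duality with $H^{n-i}_I(R)$, to identify the two inverse limits, and transports the surjection across that identification. You instead use the commutative square: the natural projection for $J$ is surjective as soon as its composite with the natural map of limits is. So the left vertical map never needs to be an isomorphism.

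Your route buys two things.
\begin{enumerate}
\item It avoids (\ref{iso2}) altogether. It also settles a compatibility point the paper leaves implicit: that the duality-derived identification of the limits commutes with the projections to $H^i_{\fm}(R/\varphi^s(J)R)$, which is exactly your square.
\item It shows that the hypothesis $\sqrt{I}=\sqrt{J}$ is not used, since $(R,J,\varphi)$ is assumed separately to be a cofinal flat triple.
\end{enumerate}
Your final sentence is slightly off on this point. The radical equality does not ``ensure'' that $(R,J,\varphi)$ is a cofinal flat triple; that is an independent hypothesis, so the radical condition is simply unused.

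The paper's route does give the extra fact that the formal local cohomology along the two chains coincides, but the statement does not need it.
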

\begin{proof}
Fix an integer $i$. By applying the exact functor $\Phi^t(-)$ to the given epimorphism, we derive that the map
\begin{equation}\label{epi1}
H^{i}_{\fm}(R/\varphi^t(I)R) \rightarrow H^{i}_{\fm}(R/\varphi^{t}(J)R) 
\end{equation}
is an epimorphism for every $t$. As $R/I$ is a $\varphi$-cML, the natural map
\begin{equation}\label{epi2}
\displaystyle\vpl_t H^{i}_{\fm}(R/\varphi^t(I)R) \rightarrow H^{i}_{\fm}(R/\varphi^{s}(I)R),  
\end{equation}
is an epimorphism for all $s$. Combining (\ref{epi1}) and (\ref{epi2}) we get the following epimorphism
$$
\displaystyle\vpl_t H^{i}_{\fm}(R/\varphi^t(I)R) \rightarrow H^{i}_{\fm}(R/\varphi^{s}(J)R) \quad \mbox{for\, all} \quad s.
$$ From the assumptions, we deduce that $\{\varphi^t(I)R\}_{t \geq 0}$ and $\{\varphi^t(J)R\}_{t \geq 0}$ are both cofinal with $\{I^t\}_{t \geq 0}$. In particular, we can use (\ref{iso2}) to deduce an isomorphism
$$
\displaystyle\vpl_t H^{i}_{\fm}(R/\varphi^t(I)R) \cong \displaystyle\vpl_t H^{i}_{\fm}(R/\varphi^t(J)R).
$$
Therefore, we get an epimorphism
$$
\displaystyle\vpl_t H^{i}_{\fm}(R/\varphi^t(J)R) \rightarrow H^{i}_{\fm}(R/\varphi^{s}(J)R),
$$
for each $s$. This shows that $R/J$ is  $\varphi$-cML.\end{proof}

We note there is a connection between Theorem (\ref{surj.lc}) and cML rings of positive characteristic. For this, let us recall the following notion introduced by  Bhatt, Blickle, Lyubeznik, Singh and Zhang in \cite{LP}.

\begin{definition} A regular local ring $(R,\fm)$ of characteristic $p > 0$ is  $F$-{\it full} if the natural map $$\mathcal{F}_R (H^i_{\fm} (R)) \to H^i_{\fm} (R)$$ is surjective for all $i$. Here, $\mathcal{F}_R (-)$ denotes the Peskine-Szpiro's Frobenius functor.
Because of (\cite[lemma 2.2]{Lyubeznik2006}), this is equivalent to saying that the map $H^{i}_{\fm}(R/I^{[p]}) \rightarrow H^{i}_{\fm}(R/I)$ -- induced
from the natural projection $R/I^{[p]} \rightarrow R/I$ -- is surjective for all $i$. \end{definition}

\begin{corollary}\label{Ffull}
Let $(R, \fm)$ be a regular local ring of characteristic $p > 0$. Suppose that $R/I$ is $F$-full. If $R/I^{[p^e]}$ is cML for some $e\geq 0$, then so is $R/I$.
\end{corollary}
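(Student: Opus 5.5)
We will apply Theorem \ref{surj.lc} with $J=I$ and with $I^{[p^e]}$ in the role of the smaller ideal, working in the cofinal flat triple setting of Example \ref{char p example} ($\varphi=F$). Both $(R,I,F)$ and $(R,I^{[p^e]},F)$ are cofinal flat triples by that example. We also have $\sqrt{I^{[p^e]}}=\sqrt{I}$ and $I^{[p^e]}\subset I$. So the only hypothesis of Theorem \ref{surj.lc} left to check is that the natural map
$$H^i_{\fm}(R/I^{[p^e]})\to H^i_{\fm}(R/I)$$
is surjective for every $i$. Once this holds, Theorem \ref{surj.lc} transfers the cML property from $R/I^{[p^e]}$ to $R/I$.

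The surjectivity will come from the $F$-full hypothesis, used in its reformulation: $H^i_{\fm}(R/I^{[p]})\to H^i_{\fm}(R/I)$ is surjective for all $i$. We argue by induction on $e$, the case $e=0$ being trivial. The map $R/I^{[p^e]}\to R/I$ factors as
$$R/I^{[p^e]}\to R/I^{[p^{e-1}]}\to \cdots \to R/I^{[p]}\to R/I.$$
Each step $R/I^{[p^{k+1}]}\to R/I^{[p^k]}$ is obtained from $R/I^{[p]}\to R/I$ by applying the exact functor $\Phi^k$ (here the Frobenius functor), since $\Phi^k(R/I)=R/F^k(I)R=R/I^{[p^k]}$. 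Exactly as in the proofs of the first Proposition of Section~\ref{sec3} and of Theorem \ref{surj.lc}, $\Phi^k$ commutes with $H^i_{\fm}(-)$ because $\varphi(\fm)R$ is $\fm$-primary. It also preserves epimorphisms because it is exact. Hence every map $H^i_{\fm}(R/I^{[p^{k+1}]})\to H^i_{\fm}(R/I^{[p^k]})$ is surjective. Composing these maps gives surjectivity of $H^i_{\fm}(R/I^{[p^e]})\to H^i_{\fm}(R/I)$, and this composite is the natural map by functoriality.

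The main point requiring care is the identification $\Phi^k(H^i_{\fm}(M))\cong H^i_{\fm}(\Phi^k M)$, compatible with the natural maps. We take this from the same standard fact (\cite{SinghWalther2007}) already used in the paper, via the Čech complex on generators of $\fm$ and the flatness of $F$. With that in place, Theorem \ref{surj.lc} finishes the proof.
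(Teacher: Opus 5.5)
Your proposal is correct and follows the same route as the paper. You use the reformulation of $F$-fullness, transport the surjection $H^i_{\fm}(R/I^{[p]})\to H^i_{\fm}(R/I)$ along the exact Frobenius functors $\Phi^k$ and compose to get surjectivity of $H^i_{\fm}(R/I^{[p^e]})\to H^i_{\fm}(R/I)$, then apply Theorem~\ref{surj.lc} with $I^{[p^e]}\subset I$; you are simply more explicit than the paper about the identification $\Phi^k(H^i_{\fm}(M))\cong H^i_{\fm}(\Phi^k M)$ and why the composite is the natural map.
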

\begin{proof}
 By hypothesis, the map $H^{i}_{\fm}(R/I^{[p]}) \rightarrow H^{i}_{\fm}(R/I)$ is an epimorphism for all $i$.
 Applying successively the functors $\Phi, \Phi^2, \ldots, \Phi^e$, we deduce that the same is true for the map
 $$
 H^{i}_{\fm}(R/I^{[p^e]}) \rightarrow H^{i}_{\fm}(R/I).
 $$
 Now, the result follows by Theorem (\ref{surj.lc}).
\end{proof}

\subsection{Some regularity criteria in prime characteristic}
The background of the cML concept is essentially a cofinal flat triple $(R, I, \varphi)$, where $R$ is assumed to be regular.
For the positive characteristic case, $\varphi$ is taken to be the Frobenius map, whose flatness is known (by Kunz' theorem) to be equivalent to the regularity of $R$.
In this subsection, we provide a couple of (co)homological criteria for the regularity of $R$ in prime characteristic, which is, undoubtedly, a desirable property for a local ring.

Throughout this section, $(R, \mathfrak{m}, K)$ denotes a  regular local ring with ${ \rm char}\,R = p> 0$. Let $F\colon R\to R$ be the Frobenius endomorphism of $R$. For any given integer $t\geq 1$ and an $R$-module $M$, we will denote the module $\Phi^t(M)$ simply by $M^{(t)}$. Here, $\Phi$ is the functor  defined in (\ref{functorPhi}) with $\varphi = F$.  

Recall that $R$ is {\it {\rm F}-finite} if, for some (equivalently, all)  $t\geq 1$, the $R$-module $R^{(t)}$ is finite. This property is known to hold for fundamental classes
of rings, for instance, when $R$ is a complete local ring with perfect residue field or a localization of an affine algebra over a perfect field (see, e.g., \cite[p.\,398]{BH}). Note that if $R$ is {\rm F}-finite and $M$ is a finite $R$-module, then each $M^{(t)}$ is finite as well.

 Note that, as ${\rm char}\,R=p$, the local ring $R$ is equicharacteristic if ${\rm char}\,K=p$. A crucial tool to be used here is the following criterion established in \cite[Theorem 1.1 and Example 5.10(1)]{AHIY}.

\begin{lemma}\label{Av} Suppose $R$ is equicharacteristic. If $M$ is a non-zero finite $R$-module such that ${\rm pd}_RM^{(t)}<\infty$ or ${\rm id}_RM^{(t)}<\infty$ for some $t\geq 1$, then $R$ is regular.
\end{lemma}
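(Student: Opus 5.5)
The plan is to deduce this from the general criterion of \cite[Theorem 1.1]{AHIY} for \emph{contracting} endomorphisms, specialized through \cite[Example 5.10(1)]{AHIY}. Here I read $M^{(t)}$ as $M$ regarded as an $R$-module by restriction of scalars along $\varphi^t$, where $\varphi$ is a local endomorphism of $R$. This reading is inferred; it is the one that makes the statement non-trivial and independent of the characteristic. Recall that $\varphi$ is contracting if $\varphi^e(\fm)\subseteq \fm^2$ for some $e$, and hence $\varphi^n(\fm)R\subseteq \fm^{c}$ for any prescribed $c$ once $n\gg 0$. The first step uses the equicharacteristic hypothesis. Pass to $\widehat R$, which changes neither regularity, nor the finiteness of $\pd$ and $\mathrm{id}$, nor the nonvanishing of $M$. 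By Cohen's structure theorem, $\widehat R$ is then a quotient of a power series ring over a coefficient field, and it carries a contracting endomorphism: the Frobenius in characteristic $p$, and $x_i\mapsto x_i^2$ on suitably chosen generators in general. This is the content of \cite[Example 5.10(1)]{AHIY}.

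Next I reduce the injective case to the flat case. Over the complete ring, Matlis duality turns $\mathrm{id}_R M^{(t)}<\infty$ into finite flat dimension of $(M^{(t)})^{\vee}$, which is again a restriction along $\varphi^t$. So it suffices to prove that $\mathrm{fd}_R M^{(t)}<\infty$ forces $\pd_R K<\infty$. Serre's theorem then gives regularity.

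For the flat case, let $F$ be a minimal free resolution of $K$. Then
$$\operatorname{Tor}^R_i(K,M^{(t)})\cong H_i\bigl((F\otimes_{R,\varphi^t}R)\otimes_R M\bigr),$$
viewed through $\varphi^t$. The complex $G=F\otimes_{R,\varphi^t}R$ consists of free modules of the same ranks as $F$. Its differentials have entries in $\varphi^t(\fm)R\subseteq \fm^{c}$ with $c$ large. A Koh--Lee type lemma then applies: there is $s=s(M)$, coming from Artin--Rees, such that $H_i(G\otimes M)\neq 0$ whenever the differentials of $G$ lie in $\fm^{s}$ and $G_i\neq 0$. Finite flat dimension kills these Tor modules for $i>\mathrm{fd}$, so $F_i=0$ for large $i$, i.e.\ $\pd_R K<\infty$.

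The main obstacle is that $t$ is fixed by the hypothesis, whereas the Koh--Lee argument needs $\varphi^t(\fm)R\subseteq\fm^{s(M)}$. I would first try to show that finite flat dimension propagates from $M^{(t)}$ to $M^{(nt)}$. To this end, iterate the base-change identity above on $N=M^{(t)}$, tracking flat dimension through a finite-length filtration of $M$ rather than $M$ itself. This is exactly the point handled carefully in \cite{AHIY}. If it resists, I would simply invoke \cite[Theorem 1.1]{AHIY} verbatim at this step.
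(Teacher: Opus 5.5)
The paper gives no argument for this lemma. It quotes \cite[Theorem 1.1 and Example 5.10(1)]{AHIY}, applied to the Frobenius $F$, which is contracting because $F(\fm)R=\fm^{[p]}\subseteq\fm^2$ (so every $F^t$ is contracting too). Your reading of $M^{(t)}$ as restriction of scalars along $F^t$ is the right one, and your fallback of invoking \cite[Theorem 1.1]{AHIY} verbatim is exactly the paper's proof.

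What you put in front of that fallback is not an independent proof, and the gap is where you located it. A Koh--Lee type argument needs $F^t(\fm)R\subseteq\fm^{s(M)}$, i.e.\ $t$ large in terms of $M$. With $t$ fixed it gives nothing, and removing that largeness condition is precisely what \cite[Theorem 1.1]{AHIY} adds to earlier results of this type. Your proposed propagation from $M^{(t)}$ to $M^{(nt)}$ cannot be run on $N=M^{(t)}$ alone. Finite flat dimension of a module $N$ says nothing about ${}^{F^t}N$: for $N=R$, ${}^{F^t}R$ has finite flat dimension only when $R$ is regular. So what is missing is essentially the content of the cited theorem itself.

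Several auxiliary steps are also wrong or unnecessary.
\begin{enumerate}
\item The Koh--Lee statement you quote is false as written. Let $R$ be regular of dimension $n\geq 1$ and $M=R$. Then $G=F\otimes_{R,F^t}R$ is the Koszul complex on $x_1^{p^t},\dots,x_n^{p^t}$. Its differentials lie in $\fm^{p^t}$ and $G_1\neq 0$, yet $H_1(G\otimes_R M)=0$. So even the large-$t$ case needs a correctly formulated version.
\item The endomorphism is not yours to choose. The hypothesis concerns restriction along the given Frobenius, so producing some contracting endomorphism of $\widehat R$ is beside the point. Moreover, $x_i\mapsto x_i^2$ need not preserve the defining ideal of $\widehat R$ as a quotient of a power series ring. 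The equicharacteristic hypothesis is automatic here, since ${\rm char}\,R=p$.
\item Passing to $\widehat R$ is unnecessary and not harmless. Unless $R$ is F-finite, $M^{(t)}$ is not finitely generated, and $\widehat R\otimes_R M^{(t)}$ is in general not the restriction of $\widehat M$ along the Frobenius of $\widehat R$.
\item Matlis duality sends $M^{(t)}$ (when $F$ is finite) to the restriction of the Artinian module $M^{\vee}$. An Artin--Rees argument for finite modules is not available there. The injective case is better handled through $\Ext^i_R(K,M^{(t)})\cong H^i(\Hom_R(G,M))$, or simply by \cite[Theorem 1.1]{AHIY}, which also covers injective dimension.
\end{enumerate}
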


Our first regularity criterion requires the vanishing of ${\rm Tor}_i^R(R/I, M^{(t)})$ for certain values of $i$.
\begin{theorem} Suppose $R$ is {\rm F}-finite and equicharacteristic. If $I$ is a proper ideal of $R$ with $R/I$ regular and $M$ is a non-zero finite $R$-module such that
    $${\rm Tor}_i^R(R/I, M^{(t)})=0 \quad \mbox{for\, some} \quad t\geq 1$$ and ${\rm dim}\,(R/I)+1$ consecutive positive values of $i$, then $R$ is regular.
\end{theorem}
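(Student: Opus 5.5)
Set $N:=M^{(t)}$. By F-finiteness $N$ is a finite $R$-module, and it is nonzero since $M\neq 0$ and the Frobenius is faithfully flat. The goal is to show that the Tor vanishing forces $\pd_R N<\infty$; Lemma~\ref{Av} then gives regularity of $R$. (The statement places us in the setting of this subsection, but the content of the criterion is exactly this finite projective dimension, so this is the input needed for Lemma~\ref{Av}.)

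\emph{Step 1: reduce to a regular sequence.} Since $R/I$ is regular, its maximal ideal $\fm/I$ is generated by a regular system of parameters $\bar y_1,\dots,\bar y_d$ with $d=\dim R/I$. Lift these to $y_1,\dots,y_d\in\fm$. Then $\fm = I+(y_1,\dots,y_d)$, and $K=R/\fm$ is obtained from $R/I$ by successively killing elements that are regular on the corresponding quotients $R/(I,y_1,\dots,y_{j-1})$. For each $j$ there is a short exact sequence
$$0\to R/(I,y_1,\dots,y_{j-1}) \xrightarrow{\,y_j\,} R/(I,y_1,\dots,y_{j-1}) \to R/(I,y_1,\dots,y_j)\to 0 .$$

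\emph{Step 2: propagate the vanishing.} Suppose ${\rm Tor}_i^R(R/I,N)=0$ for the $d+1$ consecutive values $i=a,a+1,\dots,a+d$, with $a\geq 1$. Apply $-\otimes_R N$ to the sequence for $j=1$. Its long exact Tor sequence gives ${\rm Tor}_i^R(R/(I,y_1),N)=0$ for $i=a+1,\dots,a+d$: each such term sits between ${\rm Tor}_i^R(R/I,N)$ and ${\rm Tor}_{i-1}^R(R/I,N)$, both of which vanish. Each step thus costs one value of $i$ at the bottom of the window. After $d$ steps we obtain ${\rm Tor}_{a+d}^R(K,N)=0$.

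\emph{Step 3: conclude.} For a finite module $N$ over the local ring $R$, the condition ${\rm Tor}_{a+d}^R(K,N)=0$ says the Betti number $\beta_{a+d}(N)$ is zero. Hence the minimal free resolution of $N$ stops, so $\pd_R N\le a+d-1<\infty$. Lemma~\ref{Av} then yields that $R$ is regular.

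The main obstacle is the bookkeeping in Step 2, specifically checking that the window shrinks by exactly one at each step. Because the window starts at some $a\geq 1$ and each step moves its lower end up, there is no problem at low degrees; the count $d+1$ is exactly what leaves one surviving index after $d$ steps. The other point to verify is that $N$ is finite and nonzero, which is where F-finiteness is used, so that the minimal-resolution argument in Step 3 applies.
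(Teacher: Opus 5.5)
Your proof is correct and is essentially the paper's argument. The paper likewise shifts the Tor-vanishing window along a length-$\dim R/I$ resolution of $K$ over $R/I$ to reach ${\rm Tor}^R_{a+d}(K,N)=0$, hence $\pd_R N<\infty$, and then applies Lemma~\ref{Av}; where you kill a regular system of parameters one element at a time, it breaks a minimal free $R/I$-resolution of $K$ into syzygy sequences, which is the same bookkeeping. One small point: your appeal to faithful flatness of Frobenius to get $N\neq 0$ would be circular if $R$ were not already assumed regular, but it is also unnecessary, since Lemma~\ref{Av} only asks that $M\neq 0$ and Step 3 only needs $N$ to be finite.
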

\begin{proof}  First, for simplicity, write $S=R/I$, whose residue field is seen to be isomorphic to the residue field $K$ of $R$. Let $s={\rm dim}\,S$, which we may clearly assume to be positive.

By hypothesis, there is an integer $\ell \geq 1$ such that ${\rm Tor}_i^R(S, M^{(t)})=0$ for all $i=\ell, \ldots, \ell+s$. Since $S$ is regular, ${\rm pd}_SK=s$. Pick a minimal resolution $$0 \rightarrow G_s \to G_{s-1} \to \cdots \to G_0 \to K \rightarrow 0$$ of $K$, where the $G_i$'s are finite free $S$-modules. Write $Z_j$ for the $j$th syzygy module of $K$. Let us break the above resolution into short exact sequences  $0 \rightarrow Z_{j+1} \to G_j\to Z_j \rightarrow 0$ for $j=0, \ldots, s-1$, with $Z_0=K$ and $Z_s=G_s$. We can regard them as exact sequences of $R$-modules. The last one is $0 \rightarrow G_s \to G_{s-1}\to Z_{s-1} \rightarrow 0$. Tensoring this sequence with $M^{(t)}$ and using that ${\rm Tor}_{\ell}^R(S, M^{(t)})={\rm Tor}_{\ell +1}^R(S,M^{(t)})=0$, we obtain ${\rm Tor}_{\ell+1}^R(Z_{s-1},M^{(t)})=0$. Proceeding analogously, we derive that $${\rm Tor}_{\ell+s-1}^R(Z_1,M^{(t)})=0.$$ Also, recall  ${\rm Tor}_{\ell+s}^R(S,M^{(t)})=0$. Tensoring with $M^{(t)}$ the exact sequence $0 \rightarrow Z_1 \to G_0\to K \rightarrow 0$ yields  ${\rm Tor}_{\ell+s}^R(K,M^{(t)})=0$, which clearly forces ${\rm pd}_RM^{(t)}<\infty$. By Lemma \ref{Av}, we conclude that $R$ is regular. \end{proof}

For the second criterion, we consider the $R/I$-submodule $0:_{M^{(t)}}I=\{w\in M^{(t)} \mid Iw=0\}$.

\begin{theorem} Suppose $R$ is {\rm F}-finite and equicharacteristic. If $I$ is a proper ideal of $R$ and $M$ is a non-zero finite $R$-module such that ${\rm id}_{R/I}(0:_{M^{(t)}}I)<\infty$ {\rm (}e.g., if $R/I$ is regular{\rm )} and
 $${\rm Ext}^j_R(R/I, M^{(t)})=0 \quad \mbox{for\, some} \quad t\geq 1 \quad \mbox{and\, all} \quad j\geq 1,$$
then $R$ is regular.
\end{theorem}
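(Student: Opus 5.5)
Write $N=M^{(t)}$ and $S=R/I$. The aim is to show $\mathrm{id}_R N<\infty$ and then conclude with Lemma \ref{Av}. The natural tool is the standard change-of-rings spectral sequence
$$E_2^{p,q}=\Ext^p_{S}\bigl(X,\Ext^q_R(S,N)\bigr)\Longrightarrow \Ext^{p+q}_R(X,N),$$
valid for any $S$-module $X$. It comes from the adjunction $\Hom_R(X,N)\cong\Hom_S(X,\Hom_R(S,N))$. The hypothesis gives $\Ext^q_R(S,N)=0$ for $q\ge 1$, so the sequence collapses and we get
$$\Ext^j_R(X,N)\cong \Ext^j_S(X,\,0:_N I)\quad\text{for all } j,$$
because $\Hom_R(S,N)=0:_N I$. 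I would either cite this collapse or prove it directly. For the direct proof, take an injective resolution $E^\bullet$ of $N$ over $R$. Then $\Hom_R(S,E^\bullet)$ is a complex of injective $S$-modules whose cohomology is $\Ext^\ast_R(S,N)$. By hypothesis this cohomology is concentrated in degree $0$, so the complex is an injective resolution of $0:_N I$ over $S$. Applying $\Hom_S(X,-)$ and using the adjunction gives the isomorphism.

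Now take $X=K$, which is an $S$-module because $I$ is proper. We get $\Ext^j_R(K,N)\cong\Ext^j_S(K,0:_N I)$. The module $0:_N I$ is finitely generated over $S$, since $N$ is finite when $R$ is F-finite. By hypothesis it has finite injective dimension over $S$, so $\Ext^j_S(K,0:_N I)=0$ for $j\gg0$. Hence $\Ext^j_R(K,N)=0$ for $j\gg 0$. Since $N$ is a finite module over the local ring $R$, this forces $\mathrm{id}_R N<\infty$.

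It remains to check that Lemma \ref{Av} applies. We need $M\neq 0$, which is assumed, and we have $\mathrm{id}_R M^{(t)}<\infty$, so the lemma yields that $R$ is regular. One degenerate case needs attention: if $0:_N I=0$, the isomorphism shows $\Ext^j_R(K,N)=0$ for all $j$, so $\mathrm{id}_RN<\infty$ still holds, and $N\neq0$ anyway since $\varphi_\ast R$ is faithfully flat. The parenthetical case in the statement, $R/I$ regular, is covered because every finite module over a regular local ring has finite injective dimension.

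The step I expect to need the most care is the collapse of the spectral sequence, in particular that the complex $\Hom_R(S,E^\bullet)$ really is an $S$-injective resolution. This holds because $\Hom_R(S,E)$ is $S$-injective whenever $E$ is $R$-injective. The rest is routine.
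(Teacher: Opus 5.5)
Your proposal is correct and follows essentially the same route as the paper. The paper also uses the $\Ext$-vanishing hypothesis to make $\Hom_R(S,E^\bullet)$ an $S$-injective resolution of $0:_N I$. Adjunction then gives $\Ext^j_R(K,N)\cong \Ext^j_S(K,0:_N I)=0$ for $j\gg 0$, so $\mathrm{id}_R N<\infty$ by the standard criterion for finite modules, and Lemma \ref{Av} finishes.

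The spectral-sequence framing is only a repackaging of this direct computation. Your degenerate case $0:_N I=0$ cannot actually occur, though treating it does no harm: it would force $\Ext^j_R(S,N)=0$ for all $j\geq 0$, hence $IN=N$ and so $N=0$ by Nakayama, contradicting $N\neq 0$.
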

\begin{proof} Pick an injective resolution $${\bf J}: 0 \rightarrow M^{(t)} \to J^{0} \to J^{1} \to \cdots $$ of $M^{(t)}$. Because  ${\rm Ext}^j_R(S, M^{(t)})=0$ for all $j\geq 1$, the co-complex ${\rm Hom}_R(S, {\bf J})$ is acyclic. Note that each ${\rm Hom}_R(S, J^{i})$ has a natural structure of $S$-module. In addition, by \cite[Lemma 3.1.6]{BH}, such $S$-modules are injective. Therefore, ${\rm Hom}_R(S, {\bf J})$ is an injective resolution of  ${\rm Hom}_R(S, M^{(t)})$ over $S$.  Applying ${\rm Hom}_S(K, -)$ to ${\rm Hom}_R(S, {\bf J})$, we get an isomorphism of co-complexes
$${\rm Hom}_S(K, {\rm Hom}_R(S, {\bf J}))\cong {\rm Hom}_R(K, {\bf J})$$ which yields isomorphisms of $R$-modules $${\rm Ext}^j_S(K, {\rm Hom}_R(S, M^{(t)}))\cong {\rm Ext}^j_R(K, M^{(t)}) \quad \mbox{for\, all} \quad j\geq 1.$$ Now, because the $S$-module $0:_{M^{(t)}}I\cong {\rm Hom}_R(S, M^{(t)})$ has finite injective dimension, we have
$${\rm Ext}^j_S(K, {\rm Hom}_R(S, M^{(t)}))=0 \quad \mbox{for\, all} \quad j\gg 0.$$ Therefore, ${\rm Ext}^j_R(K, M^{(t)})=0$ for all $j \gg 0$, which according to \cite[Proposition 3.1.14]{BH} gives ${\rm id}_RM^{(t)}<\infty$. Finally, Lemma (\ref{Av}) guarantees that $R$ is regular. \end{proof}

Now it seems natural to ask:

\begin{question}\rm Suppose $R$ is {\rm F}-finite and equicharacteristic. Let $I$ be a proper ideal of $R$ and $M$ a non-zero finite $R$-module such that ${\rm id}_{R/I}(0:_{M^{(t)}}I)<\infty$ and
 $${\rm Ext}^j_R(M^{(t)}, R/I)=0 \quad \mbox{for\, some} \quad t\geq 1 \quad \mbox{and\, all} \quad j\geq 1.$$ Is it true that $R$ must be regular?
\end{question}

\section{Another theorem, and an application to the cohomology of thickenings}

In this last section, we highlight the following result, which (together with  Theorem (\ref{cmCMLR})) will enable us to derive an application concerning the vanishing of the sheaf cohomology of thickenings in positive characteristic.

\begin{theorem}\label{main-for-app}
     Let $(R,I, \varphi)$ be a cofinal flat triple. Assume  $R$ is a regular local ring and $R/I$ is $\varphi$-cML. Fix an integer $i_0\geq 1$ and suppose $\depth (R/ \varphi^r(I)R)>i_0 +1$ for all $r$ varying in an infinite subset $\Gamma \subset \mathbb{N}$. Then, the natural map
     $$
     \displaystyle\vpl_{s} H^{i_0}_{\fm} (R/I^s) \rightarrow H^{i_0}_{\fm} (R/I^t)
     $$
     is an epimorphism for all $t$.
 \end{theorem}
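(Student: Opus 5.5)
The plan is to show that for every $t$ both sides of the map vanish, except that on the target side this will only work through a duality step I have not fully checked. I start with the source. By (\ref{iso2}), applied to the chain $\{\varphi^r(I)R\}_{r}$, we have $\vpl_s H^{i_0}_{\fm}(R/I^s)\cong \vpl_r H^{i_0}_{\fm}(R/\varphi^r(I)R)$. Since $\Gamma$ is infinite, it is cofinal in $\mathbb{N}$, so this limit may be computed over $r\in\Gamma$. For $r\in\Gamma$ the hypothesis $\depth R/\varphi^r(I)R>i_0+1$ gives $H^{i_0}_{\fm}(R/\varphi^r(I)R)=H^{i_0+1}_{\fm}(R/\varphi^r(I)R)=0$. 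Hence the source is $0$, and the statement amounts to proving $H^{i_0}_{\fm}(R/I^t)=0$ for every $t$. Here the cML hypothesis enters through Theorem \ref{cddepth}: it gives $\depth R/I=\depth R/\varphi^r(I)R>i_0+1$, that is, $\cd(R,I)=n-\depth R/I<n-i_0-1$, where $n=\dim R$.

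Fix $t$. For $r\in\Gamma$ large, cofinality gives $\varphi^r(I)R\subset I^t$. I would then consider the short exact sequences of inverse systems indexed by such $r$:
$$0\to I^t/\varphi^r(I)R\to R/\varphi^r(I)R\to R/I^t\to 0,$$
where the last system is constant. All the modules $H^j_{\fm}(-)$ of finite modules are Artinian, so these systems satisfy the Mittag-Leffler condition and $\vpl$ is exact on them. The long exact sequence of local cohomology therefore passes to the limit. Because the limits of $H^{i_0}_{\fm}(R/\varphi^r(I)R)$ and $H^{i_0+1}_{\fm}(R/\varphi^r(I)R)$ vanish (each term is $0$ for $r\in\Gamma$), this yields
$$H^{i_0}_{\fm}(R/I^t)\cong \vpl_r H^{i_0+1}_{\fm}\bigl(I^t/\varphi^r(I)R\bigr).$$

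It remains to show that the right-hand side vanishes, and this is the step I expect to be the main obstacle. The chain $\{\varphi^r(I)R\}$ is sandwiched between powers of $I$: for each $r$ there is $s$ with $I^{s}I^t\subset\varphi^r(I)R$, and for each $s$ there is $r$ with $\varphi^r(I)R\subset I^{s+t}$. Hence the system $\{I^t/\varphi^r(I)R\}$ is pro-isomorphic to $\{I^t/I^sI^t\}$, and the limit is the formal local cohomology $\vpl_s H^{i_0+1}_{\fm}(I^t/I^sI^t)$. I would dualize exactly as in (\ref{isoauxiliar}) and Remark \ref{rmkisoauxiliar}, using local duality over the regular ring $R$. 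This identifies the Matlis dual of the limit with $\vil_s\Ext^{n-i_0-1}_R(I^t/I^sI^t,R)$, a generalized local cohomology module attached to $I^t$. I would then compare it with $H^{n-i_0-1}_I(R)$ via the sequences $0\to I^t\to R\to R/I^t\to0$ and $0\to I^sI^t\to I^t\to I^t/I^sI^t\to 0$, using that $R/I^t$ is $I$-torsion. In degrees $j=n-i_0-1\ge 2$ this should identify the module with $H^{j}_I(R)$, which is $0$ because $j>\cd(R,I)$. This is precisely where the bound $\depth>i_0+1$, rather than $>i_0$, is consumed.

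The low-degree cases $j\le1$, and the use of $i_0\ge1$, need separate care. The comparison maps in degrees $0,1$ involve $\Hom(I^t,R)$ and $H^0_I$, $H^1_I$, and I would check directly that the boundary maps are compatible with the limits there.
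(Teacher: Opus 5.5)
Your first step is correct, and it does not even need the cML hypothesis. Given $s$, choose $r\in\Gamma$ with $\varphi^r(I)R\subset I^s$, and then $s'$ with $I^{s'}\subset\varphi^r(I)R$. The transition map $H^{i_0}_\fm(R/I^{s'})\to H^{i_0}_\fm(R/I^s)$ then factors through $H^{i_0}_\fm(R/\varphi^r(I)R)=0$. So $\varprojlim_s H^{i_0}_\fm(R/I^s)=0$, and the statement is equivalent to $H^{i_0}_\fm(R/I^t)=0$ for all $t$.

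The gap is your last step: identifying $\varinjlim_s\Ext^{j}_R(I^t/I^sI^t,R)$ with $H^j_I(R)$ is wrong. That $R/I^t$ is $I$-torsion kills $H^{\ge 1}_I(R/I^t)$. But in the contravariant slot of generalized local cohomology, $R/I^t$ contributes $\varinjlim_s\Ext^j_R(R/I^t\otimes R/I^s,R)=\Ext^j_R(R/I^t,R)$, which need not vanish. Precisely, $0\to I^{s+t}\to I^t\to I^t/I^{s+t}\to 0$ and $\Ext^k_R(I^u,R)\cong\Ext^{k+1}_R(R/I^u,R)$ for $k\ge 1$ give, for $j\ge 2$, an exact sequence
\[
H^j_I(R)\to\varinjlim_s\Ext^j_R(I^t/I^{s+t},R)\to\Ext^{j+1}_R(R/I^t,R)\to H^{j+1}_I(R).
\]
For $j=n-i_0-1>\cd(R,I)$, the module you want to kill is therefore $\Ext^{n-i_0}_R(R/I^t,R)$, the Matlis dual of $H^{i_0}_\fm(R/I^t)$ itself. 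The argument is circular. So is your second step: for $r\in\Gamma$ it is already a termwise isomorphism $H^{i_0}_\fm(R/I^t)\cong H^{i_0+1}_\fm(I^t/\varphi^r(I)R)$, so the limit gives nothing new.

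This cannot be repaired, because your reduction shows the statement as written is false. Take the first example at the end of the paper: $I$ is generated by the maximal minors of a generic $2\times 4$ matrix in $8$ variables, $p=2$ or $3$, localized at $\fm$ for the local setting, and $i_0=3$. Then $R/I$ is Cohen--Macaulay, hence $\varphi$-cML for $\varphi$ the Frobenius. Also $\depth R/I^{[p^e]}=5>i_0+1$ for all $e$, so the source is $0$. But $\depth R/I^2=3$, so $H^3_\fm(R/I^2)\neq 0$ and the map is not onto for $t=2$. Dually, surjectivity would mean $\Ext^5_R(R/I^2,R)\hookrightarrow H^5_I(R)=0$, which is impossible since $\pd_R R/I^2=5$.

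The paper's own proof breaks at the same spot. It asserts $\pd_R(I^t/\varphi^r(I)R)\le\pd_R(R/\varphi^r(I)R)$, but the short exact sequence only gives $\le\max\{\pd_R R/\varphi^r(I)R,\ \pd_R R/I^t-1\}$. Under the hypotheses one has exactly $\Ext^{n-i_0-1}_R(I^t/\varphi^r(I)R,R)\cong\Ext^{n-i_0}_R(R/I^t,R)$, which is the same obstruction as yours. Any correct version needs an additional hypothesis controlling $\Ext^{n-i_0}_R(R/I^t,R)$.
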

 \begin{proof}
     Since $R/I$ is $\varphi$-cML, the map
     $$
     \psi_{t,i} : \displaystyle\vpl_{s} H^{i}_{\fm} (R/\varphi^s(I)R) \rightarrow H^{i}_{\fm} (R/\varphi^t(I)R)
     $$
     is surjective for all $t$ and for all $i$.
     Hence, by means of local duality, we get an injective map
     $$\psi^{\vee}_{t,i}: \Ext^{n-i}_R (R/ \varphi^t(I)R,R) \to H^{n-i}_I (R),$$
where $n = \dim R$. Since $\{\varphi^t(I)R\}_{t \geq 0}$ is cofinal with $\{I^t\}_{t \geq 0}$, for a given $t$, there exists $r \in \Gamma$ such that $\varphi^r(I)R \subset I^t$.
This inclusion induces the short exact sequence
$$
0 \to I^t/ \varphi^r(I)R \to R/\varphi^r(I)R \to R/I^t \to 0
$$
from which we derive the exact sequence
$$
\Ext^{n-i_0-1}_R (I^t/ \varphi^r(I)R,R) \to \Ext^{n-i_0}_R (R/I^t,R) \to \Ext^{n-i_0}_R (R/ \varphi^r(I)R,R). 
$$
Using the Auslander-Buchsbaum formula, 
$$
 \pd_R \left(R/\varphi^r(I)R \right)  = n- \depth R/\varphi^r(I)R < n-i_0-1.
$$
Therefore, the inequality
$$
\pd_R \left(I^t/ \varphi^r(I)R\right) \leq \pd_R \left(R/\varphi^r(I)R \right)
$$
enables us to deduce that $\Ext^{n-i_0-1}_R (I^t/ \varphi^r(I)R,R)=0$. As a consequence, the map $$\Ext^{n-i_0}_R (R/I^t,R) \to \Ext^{n-i_0}_R (R/ \varphi^r(I)R,R)$$ is injective. Composing with $\psi^{\vee}_{r,i_0}$ we obtain an injective map
$$
\phi_{t,i_0}: \Ext^{n-i_0}_R (R/I^t,R) \to H^{n-i_0}_I (R)
$$
Finally, by dualizing $\phi_{t,i_0}$, the result follows.
 \end{proof}

In the application below we are concerned with the case where $\varphi$ is the Frobenius map of a standard graded polynomial ring $R$ of prime characteristic (the graded case of Example (\ref{char p example})).

\begin{corollary}\label{mainapp} Let $k$ be a field of characteristic $p>0$ and let $R$ be a standard graded polynomial ring over $k$. Let $I$ be a non-zero homogeneous ideal of $R$ and consider the closed subscheme $X={\rm Proj}(R/I)\subset {\rm Proj}(R)$ as well as its thickenings $X_t={\rm Proj}(R/I^t)$, $t\geq 1$. Suppose $R/I$ is Cohen-Macaulay and there exists an integer  $1\leq i_0\leq {\rm dim}\,X$  such that $\depth (R/I^{[p^e]})>i_0 +1$ for infinitely many values of $e\in \mathbb{N}$. Then
    $$H^{i_0-1}(X_t, \mathcal{O}_{X_t}(-\ell))=0 \quad \mbox{for\, all} \quad \ell >0 \quad \mbox{and\, all} \quad t.$$
\end{corollary}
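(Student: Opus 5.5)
Since $R/I$ is Cohen-Macaulay, Theorem \ref{cmCMLR} (or Corollary \ref{CMcor}) shows that $R/I$ is $F$-cML, where $F$ is the Frobenius map; here $(R,I,F)$ is a cofinal flat triple by the graded version of Example \ref{char p example}. We have $F^e(I)R=I^{[p^e]}$, so the hypothesis says exactly that $\depth R/F^r(I)R>i_0+1$ for $r$ in an infinite set $\Gamma$. We then apply the graded analogue of Theorem \ref{main-for-app}, which the paper allows by its standing remark that the local results adapt to the graded setting. This yields, for every $t$, a surjection
$$\vpl_s H^{i_0}_{\fm}(R/I^s)\rightarrow H^{i_0}_{\fm}(R/I^t).$$
All maps here are degree-preserving, so the surjection holds on each graded component $[-]_{-\ell}$. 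Taking degree components commutes with inverse limits, since inverse limits of graded modules are computed degreewise in the graded category.

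Next we show that $[H^{i_0}_{\fm}(R/I^s)]_{-\ell}$ is independent of $s$ for large $s$ and fixed $\ell>0$, and that the stable value vanishes. By graded duality, $[H^{i_0}_{\fm}(R/I^s)]_{-\ell}$ is dual to $[\Ext^{n-i_0}_R(R/I^s,R(-n))]_{\ell}$. Taking the direct limit over $s$ gives $[H^{n-i_0}_I(R)]_{\ell-n}$. For $\varphi=F$, the module $H^{n-i_0}_I(R)$ is an $F$-finite module in Lyubeznik's sense, and it vanishes in degrees $\geq -n+1$; equivalently, the corresponding graded pieces of $H^{n-i_0}_I(R)$ are zero for $\ell>0$.

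To see this concretely, write $H^{n-i_0}_I(R)=\vil_e \Ext^{n-i_0}(R/I^{[p^e]},R)$. The Frobenius transition maps send degree $d$ to degree $pd$, and by (\ref{isoext}) the module $\Ext^{n-i_0}(R/I^{[p^e]},R)$ is the Frobenius pullback of $\Ext^{n-i_0}(R/I,R)$. An element of positive degree $\ell-n+\ldots$ would have to come from degrees $\ell'$ with $p^e\ell'$ fixed, which is impossible for large $e$ once the degree bounds are accounted for. Therefore $\vpl_s[H^{i_0}_{\fm}(R/I^s)]_{-\ell}=([H^{n-i_0}_I(R)]_{\ell-n})^{\vee}=0$, and surjectivity forces $[H^{i_0}_{\fm}(R/I^t)]_{-\ell}=0$ for all $t$ and all $\ell>0$.

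Finally we translate to sheaf cohomology. For $i_0\geq 2$, the Serre–Grothendieck correspondence gives
$$H^{i_0-1}(X_t,\mathcal{O}_{X_t}(-\ell))\cong [H^{i_0}_{\fm}(R/I^t)]_{-\ell},$$
which we have just shown to vanish. For $i_0=1$, we have $H^0(X_t,\mathcal O_{X_t}(-\ell))$ sitting in the exact sequence $0\to H^0_\fm\to R/I^t\to \bigoplus_j H^0(X_t,\mathcal{O}_{X_t}(j))\to H^1_{\fm}\to 0$. Since $R/I^t$ has nothing in negative degree, the degree $-\ell$ part is again $[H^1_{\fm}(R/I^t)]_{-\ell}=0$.

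The main obstacle is the vanishing step: showing that the inverse limit of formal local cohomology is zero in negative degrees. In characteristic $p$ the natural tool is the degree-multiplication-by-$p$ behaviour of Frobenius on the direct system of Ext modules. If that argument proves awkward, I would instead argue directly using the degreewise isomorphisms $\Phi^e$ from the proof of Theorem \ref{main-for-app}.
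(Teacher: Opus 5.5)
\textbf{Where your route goes wrong.} Your outline follows the paper's proof step for step: Cohen--Macaulay implies $F$-cML, then the graded form of Theorem~\ref{main-for-app} makes $\vpl_s H^{i_0}_{\fm}(R/I^s)\to H^{i_0}_{\fm}(R/I^t)$ onto, then you pass to degree $-\ell$, show the limit vanishes, and apply Serre--Grothendieck. The only difference is the vanishing step. The paper quotes Corollary~2.4 of \cite{LP}; you try a Frobenius degree argument instead.

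That argument does not work as written. $H^{n-i_0}_I(R)$ need not vanish in degrees $>-n$: for instance $H^1_{(x)}(k[x,y])$ contains $x^{-1}y^m$ in every degree $\ge -1$. The sentence about degrees $p^e\ell'$ ``once the degree bounds are accounted for'' is not a proof. You do not need it, though. Since $i_0\le\dim X$, we have $n-i_0\ge \Ht I+1$, so $H^{n-i_0}_I(R)=0$ by Peskine--Szpiro (Theorem~\ref{cddepth}). Hence $\vpl_s H^{i_0}_{\fm}(R/I^s)=0$ in \emph{every} degree.

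\textbf{The real gap.} That observation exposes a genuine gap, which your proof shares with the paper's: the surjection from Theorem~\ref{main-for-app} is not available. Combined with the vanishing just noted, it would give $H^{i_0}_{\fm}(R/I^t)=0$ for all $t$, i.e.\ $\depth R/I^t\ge\dim R/I-1$ for every $t$. The paper's own first example refutes this. For $I=I_2$ of a generic $2\times 4$ matrix, $i_0=3$ satisfies all hypotheses, yet $\depth R/I^2=3$.

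The faulty step in the proof of Theorem~\ref{main-for-app} is the inequality $\pd_R(I^t/\varphi^r(I)R)\le\pd_R(R/\varphi^r(I)R)$. The sequence $0\to I^t/\varphi^r(I)R\to R/\varphi^r(I)R\to R/I^t\to 0$ only bounds the left side by $\max\{\pd_R R/\varphi^r(I)R,\ \pd_R R/I^t-1\}$. Concretely, $\Ext^5_R(R/I^2,R)\neq 0$ because $\pd R/I^2=5$, while $\Ext^5_R(R/I^{[q]},R)=0$ because $\pd R/I^{[q]}=3$. So the claimed injection $\Ext^{n-i_0}_R(R/I^t,R)\to\Ext^{n-i_0}_R(R/\varphi^r(I)R,R)$ cannot exist.

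\textbf{The statement itself fails.} No repair of your second step is possible, because the corollary as stated is false. Adjoin a variable: set $R'=R[y]$ and $I'=IR'$, with $I$ as above.
\begin{itemize}
\item Then $R'/I'$ is Cohen--Macaulay of dimension $6$ and $\dim X'=5$.
\item Also $\depth R'/I'^{[p^e]}=6>5$ for all $e$, so $i_0=4$ is allowed.
\item By K\"unneth, $H^4_{\fm'}(R'/I'^2)\cong H^3_{\fm}(R/I^2)\otimes_k H^1_{(y)}(k[y])$.
\item Here $H^3_{\fm}(R/I^2)\neq 0$, and $H^1_{(y)}(k[y])$ is nonzero in every degree $\le -1$.
\end{itemize}
Hence $H^3(X'_2,\mathcal{O}_{X'_2}(-\ell))\cong[H^4_{\fm'}(R'/I'^2)]_{-\ell}\neq 0$ for all $\ell\gg 0$. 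What survives is only the case $t=1$, which is immediate from Cohen--Macaulayness, and the vanishing of the inverse limit. Passing from the limit to each $X_t$ is exactly the false surjectivity.
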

\begin{proof} First, letting $\fm$ be the homogeneous maximal ideal of $R$, it is well-known that $$H^{i_0}_{\fm} (R/I^t)_j\cong H^{i_0-1}(X_t, \mathcal{O}_{X_t}(j)) \quad \mbox{for\, all} \quad j\in {\mathbb Z}$$ (resp.\, for all $j<0$) whenever $i_0\geq 2$ (resp.\, $i_0=1$). By  Theorem (\ref{cmCMLR}), the ring $R/I$ is cML. Now, Theorem (\ref{main-for-app}) (which is seen to admit an analogous graded version) yields that the natural map
     $$
     \displaystyle\vpl_{s} H^{i_0}_{\fm} (R/I^s) \rightarrow H^{i_0}_{\fm} (R/I^t)
     $$
     is a (degree zero) epimorphism for all $t$. Thus, in each degree $j$, there is an epimorphism $$\displaystyle\vpl_{s} \left[H^{i_0}_{\fm} (R/I^s)\right]_j
     =\left[\displaystyle\vpl_{s} H^{i_0}_{\fm} (R/I^s)\right]_j \rightarrow H^{i_0}_{\fm}(R/I^t)_j
     $$ Using these facts, we get, for all $t$, a  vector space epimorphism 
     $$
     \displaystyle\vpl_{s} H^{i_0-1}(X_s, \mathcal{O}_{X_s}(j)) \rightarrow H^{i_0-1}(X_t, \mathcal{O}_{X_t}(j)).
     $$ On the other hand, \cite[Corollary 2.4]{LP} gives $
     \displaystyle\vpl_{s} H^{i_0-1}(X_s, \mathcal{O}_{X_s}(j))=0
     $ for all $j<0$. The result follows. \end{proof}

We close the paper with illustrations of Corollary (\ref{mainapp}) (in low characteristic, for computational reasons).

\begin{example}\rm Let $R$ be a standard graded polynomial ring in 8 indeterminates over a field $k$ of characteristic $p=2$ or $p=3$. Let $X\subset {\mathbb P}_k^7$ be the closed projective subscheme defined by the ideal $I$ of $R$ generated by the maximal minors of a $2\times 4$ generic matrix. Then, $R/I$ is Cohen-Macaulay, as ${\rm dim}\,R/I=5={\rm depth}\,R/I$. Further computations show that $${\rm depth}\,R/I^{[p^e]}=5 \quad \mbox{for\, all} \quad e\geq 0.$$ So we can apply Corollary (\ref{mainapp}) with $i_0=3$ (note $i_0<4={\rm dim}\,X$) in order to get
$$H^{2}(X_t, \mathcal{O}_{X_t}(-\ell))=0 \quad \mbox{for\, all} \quad \ell >0 \quad \mbox{and\, all} \quad t.$$ Finally, it is worth observing that, for all $t\geq 2$, $$H^{3}_{\fm} (R/I^t)\neq 0$$ since ${\rm depth}\,R/I^t=3$. As a last side remark, we can even guess (due to the genericity of the matrix) that in this example the depth of $R/I^{[p^e]}$ should still be 5 for any $p\geq 5$; this holds, for instance, if ($e=1$ and) $p\in \{5, 7, 11, 13\}$.

\end{example}

\begin{example} A first non-generic instance follows easily from a slight modification of the previous example.  Let $p=2$ or $p=3$, and $X\subset {\mathbb P}_k^6={\rm Proj}(R)={\rm Proj}(k[x_0, x_1, x_2, x_3, x_4, x_5, x_6])$ be the closed projective subscheme defined by the ideal
$$I=I_2\left(\begin{array}{cccc}
x_0  & x_1 & x_2 & x_3\\
x_4 & x_5 & x_6 & x_0 
\end{array}\right) \subset R.$$ Note that $R/I$ is Cohen-Macaulay of dimension 4. Also, ${\rm depth}\,R/I^{[p^e]}=4$ for all $e$. Thus, applying Corollary (\ref{mainapp}) with $i_0=2$, we obtain    $$H^{1}(X_t, \mathcal{O}_{X_t}(-\ell))=0 \quad \mbox{for\, all} \quad \ell >0 \quad \mbox{and\, all} \quad t.$$ Finally, we observe that $$H^{2}_{\fm} (R/I^t)\neq 0$$ as ${\rm depth}\,R/I^t=2$, for all $t\geq 2$.
\end{example}

\begin{example}  Let $p=2$ and $X\subset {\mathbb P}_k^8={\rm Proj}(R)={\rm Proj}(k[x_0, x_1, x_2, x_3, x_4, x_5, x_6, x_7, x_8])$ be the closed projective subscheme defined by the ideal
$$I=I_3\left(\begin{array}{cccc}
x_0  & x_1 & x_2 & x_3\\
x_4 & x_5 & x_6 & x_7\\ 
x_8  & x_0 & x_1 & x_2
\end{array}\right) \subset R.$$ Here, the ideal $I$ is perfect of height 2, so $R/I$ is Cohen-Macaulay of dimension 7. It can be checked that ${\rm depth}\,R/I^{[2^e]}=7$ for all $e$. Thus, applying Corollary (\ref{mainapp}) first with $i_0=4$, we obtain    $$H^{3}(X_2, \mathcal{O}_{X_2}(-\ell))=0 \quad \mbox{for\, all} \quad \ell >0.$$ Furthermore, we notice that $$H^{4}_{\fm} (R/I^2)\neq 0$$ as ${\rm depth}\,R/I^2=4$ (while ${\rm depth}\,R/I^t=5$ for all $t\geq 3$). In addition, we are also allowed to take $i_0=5$ and conclude that
$$H^{4}(X_t, \mathcal{O}_{X_t}(-\ell))=0 \quad \mbox{for\, all} \quad \ell >0 \quad \mbox{and\, all} \quad t.$$
\end{example}

\bigskip

\noindent{\bf Acknowledgments.} 
 The second-named author was in part supported by a grant from IPM (No. 1404130017). The third-named author was partially supported by CNPq (grants 406377/2021-9, 313357/2023-4 and 408698/2023-3).

\bigskip

\noindent{\bf Data availability statement.} No datasets were generated or analysed during the current study.

\bigskip

\noindent{\bf Conflict of interest statement.} On behalf of all authors, the corresponding author states that there is no conflict of interest.

\
\bibliographystyle{alpha}
\bibliography{sample.bib}

\end{document}